\newtheorem{theorem}{Theorem}[section]
\newtheorem{corollary}[theorem]{Corollary}
\newtheorem{Definition}[theorem]{Definition}
\newtheorem{lemma}[theorem]{Lemma}
\newtheorem{proposition}[theorem]{Proposition}
\newtheorem{Example}[theorem]{Example}
\newtheorem{Remark}[theorem]{Remark}
\newenvironment{remark}{\begin{Remark}\begin{em}}{\end{em}\end{Remark}}
\newenvironment{definition}{\begin{Definition}\begin{em}}{\end{em}\end{Definition}}
\newcommand{\R}{{\mathbb R}}
\newcommand{\PP}{\mathcal P}
\newcommand{\pp}{\mathbb P}
\newcommand{\G}{\frak{G}}
\newcommand{\Bw}{{\mathbf w}}
\begin{document}
\title[Monotonicity]{Monotonic Properties of the Least Squares Mean}
\author{Jimmie Lawson and Yongdo Lim}

\address{Department of Mathematics, Louisiana State University,
Baton Rouge, LA70803, USA}\email{lawson@math.lsu.edu}

\address{ Department of Mathematics,
Kyungpook National University, Taegu 702-701, Korea}
\email{ylim@knu.ac.kr}

\keywords{Least squares mean,  positive definite matix, monotonicity, metric nonpositive curvature, symmetric cone,
 Loewner-Heinz space, metric random variables, barycenter}
  \subjclass[2000]{15A48, 53C70, 60B05, 60G50, 52A55}
\maketitle
\begin{abstract}
We settle an open problem of several years standing by showing that the least-squares
mean for positive definite matrices is monotone for the usual (Loewner) order.  Indeed
we show this is a special case of its appropriate generalization to partially ordered
complete metric spaces of nonpositive curvature. Our techniques extend to establish other
basic properties of the least squares mean such as continuity and joint concavity.  Moreover,
we introduce a weighted least squares means and extend our results to this setting.
\end{abstract}

\section{Introduction}
Not only does the  study of positive definite matrices remain a
flourishing area of mathematical investigation (see e.g., the recent
monograph of R.\ Bhatia \cite{Bh07} and references therein), but
positive definite matrices have become fundamental computational
objects in many areas of engineering, statistics, quantum
information, and applied mathematics. They appear as covariance
matrices in statistics, as elements of the search space in convex
and semidefinite programming, as kernels in machine learning,  as
density matrices in quantum information, and as diffusion tensors in
medical imaging, to cite a few. A variety of metric-based
computational algorithms for positive definite matrices have arisen
for approximations, interpolation, filtering, estimation, and
averaging, the last being the concern of this paper. In recent
years, it has been increasingly recognized that the Euclidean
distance is often not the most suitable for the set  of positive
definite matrices--the positive  symmetric  cone $\pp=\pp_m$ for
some $m$--and that working with the proper geometry does matter in
computational problems.  It is thus not surprising that there has
been increasing interest in the trace metric, the distance metric
arising from the natural Riemannian structure on $\pp$ making it a
Riemannian manifold, indeed a symmetric space, of negative
curvature. (Recall the trace metric distance between two positive
definite matrices is given by
$\delta(A,B)=(\sum_{i=1}^{k}\log^{2}\lambda_{i}(A^{-1}B))^{\frac{1}{2}}$,
 where $\lambda_{i}(X)$ denotes the $i$-th eigenvalue of $X$ in non-decreasing order.)
Recent contributions that have advocated
the use of this metric in applications include \cite{FJ,PFA,ZM} for tensor computing in medical
imaging and \cite{Ba08}  for radar processing.

Since the pioneering paper of Kubo and Ando \cite{KA80}, an
extensive theory of two-variable means has sprung up for positive
matrices and operators, but the multivariable case for $n>2$ has
remained problematic.  Once one realizes, however, that the matrix
geometric mean
$\G_2(A,B)=A\#B:=A^{1/2}(A^{-1/2}BA^{-1/2})^{1/2}A^{1/2}$ is the
metric midpoint of $A$ and $B$ for the trace metric (see, e.g.,
\cite{LL01,Bh07}), it is natural to use an averaging technique over
this metric to extend this mean to a larger number of variables.  First
M.\ Moakher \cite{Mo05} and then Bhatia and Holbrook \cite{BH06},
\cite{BH06b} suggested extending the geometric mean to $n$-points by
taking the mean to be the unique minimizer of the sum of the squares
of the distances:
$$\G_n(A_1,\ldots,A_n)=\hbox{arg}\min_{X\in\pp}\sum_{i=1}^n \delta^2(X,A_i).$$
This idea had been anticipated by  \'Elie Cartan (see, for example, section 6.1.5
of \cite{Be03}), who showed among other things such a unique minimizer exists
if the points all lie in a convex ball in a Riemannian
manifold, which is enough to deduce the existence of the
least squares mean globally for ${\mathbb P}$.

Another approach, independent of metric notions,  was suggested by
Ando, Li, and Mathias \cite{ALM04} via a ``symmetrization procedure"
and induction. The Ando-Li-Mathias paper was also important for
listing, and deriving for their mean, ten desirable properties for
extended geometric means $g:{\mathbb P}^n\to {\mathbb P}$ that one
might anticipate from properties of the two-variable geometric mean,
where ${\Bbb P}={\Bbb P}_{m}$ denotes the convex cone of $m\times m$
positive definite Hermitian matrices equipped with  the Loewner
order $\leq$. For ${\Bbb A}=(A_{1},\dots,A_{n}), {\Bbb
B}=(B_{1},\dots,B_{n})\in {\Bbb P}^{n}, \sigma\in S^{n}$ a
permutation on $n$-letters, $ {\bf a}=(a_{1},\dots,a_{n})\in {\Bbb
R}_{++}^{n}\ ({\mathbb R}_{++}=(0,\infty))$, these are
\begin{itemize}
\item[(P1)]$($Consistency with scalars$)$
$ g({\Bbb A})=(A_{1}\cdots A_{n})^{1/n}$ if the $A_{i}$'s commute;
\item[(P2)] $($Joint homogeneity$)$
$ g(a_{1}A_{1},\dots,a_{n}A_{n})= (a_{1}\cdots a_{n})^{1/n}g({\Bbb
A});$
\item[(P3)] $($Permutation invariance$)$
$g({\Bbb A}_{\sigma}) =g({\Bbb A}),$ where ${\Bbb
A}_{\sigma}=(A_{\sigma(1)},\dots,A_{\sigma(n)});$
\item[(P4)] $($Monotonicity$)$ If $B_{i}\leq A_{i}$ for all $1\leq i\leq n,$ then
$ g({\Bbb B})\leq g({\Bbb A});$
\item[(P5)] $($Continuity$)$ $g$ is continuous;
\item[(P6)] $($Congruence invariance$)$
$g(M^{*}{\Bbb A}M)= M^{*}g({\Bbb A})M$ for  invertible invertible
matrix $M,$ where
$M(A_{1},\dots,A_{n})M^{*}=(MA_{1}M^{*},\dots,MA_{n}M^{*});$
\item[(P7)] $($Joint concavity$)$
$g(\lambda {\Bbb A}+(1-\lambda){\Bbb B})\geq \lambda g({\Bbb
A})+(1-\lambda)g({\Bbb B})$ for $0\leq \lambda \leq 1$;
\item[(P8)] $($Self-duality$)$
$g(A_{1}^{-1} ,\dots, A_{n}^{-1})^{-1}= g(A_{1} ,\dots, A_{n});$
\item[(P9)]$($Determinental identity$)$
$ {\mathrm {Det}}g({\Bbb A})= \prod_{i=1}^{n}({\mathrm
{Det}}A_{i})^{1/n}$; and
\item[(P10)] $($AGH mean inequalities$)$
$ n(\sum_{i=1}^{n}A_{i}^{-1})^{-1}\leq g({\Bbb A}) \leq
\frac{1}{n}\sum_{i=1}^{n}A_{i}.$
\end{itemize}
We call a mean $g$ of $n$-variables satisfying these properties a
\emph{symmetric geometric mean}, the adjective ``symmetric" describing
its invariance under permutations, property (P3).



 The Ando-Li-Mathias mean proved to be computationally cumbersome, and
 Bini, Meini, and Poloni \cite{BMP10} suggested an alternative with more rapid
 convergence properties, which also satisfied the ten axioms.  One notes in particular
 that while the axioms characterize the two-variable case, this is no longer true
 in the $n$-variable case, $n>2$.

These ten properties may be generalized to the setting of weighted
geometric means.  We recall that the two-variable weighted geometric mean is given by 
$$t\mapsto \G_2(1-t,t;A,B)=A\#_{t}B:=:A^{1/2}(A^{-1/2}BA^{-1/2})^{t}A^{1/2}, $$
which is a geodesic parametrization of the unique geodesic passing through  $A$ and $B$ for $A\ne B$.
A weighted geometric mean  of $n$-positive definite matrices should be defined for each weight, 
where the weights $\omega=(w_1,\ldots, w_n)$ vary
over  $\Delta_{n},$ the simplex of positive probability vectors convexly spanned by the unit coordinate vectors.
We define a \emph{weighted geometric mean} of $n$ positive
definite matrices to be  a map $g:\Delta_{n}\times {\Bbb P}^{n}\to {\Bbb
P}$ satisfying the following properties:
\begin{itemize}
\item[(P1)] $($Consistency with scalars$)$
$ g(\omega;{\Bbb A})=A_{1}^{w_{1}}\cdots A_{n}^{w_{n}}$ if the $A_{i}$'s
commute;
\item[(P2)] $($Joint homogeneity$)$
$ g(\omega;a_{1}A_{1}, \dots, a_{n}A_{n})= a_{1}^{w_{1}}\cdots
a_{n}^{w_{n}}g(\omega;{\Bbb A});$
\item[(P3)] $($Permutation invariance$)$
$ g(\omega_{\sigma};{\Bbb A}_{\sigma}) =g(\omega;{\Bbb A})$, where
$\omega_{\sigma}=(w_{\sigma(1)},\dots,w_{\sigma(n)});$
\item[(P4)] $($Monotonicity$)$ If $B_{i}\leq A_{i}$ for all $1\leq i\leq n,$ then
$g(\omega;{\Bbb B})\leq g(\omega;{\Bbb A});$
\item[(P5)] $($Continuity$)$ The map $g(\omega;\cdot)$ is
continuous;
\item[(P6)] $($Congruence invariance$)$
$g(\omega;M^{*}{\Bbb A}M)= M^{*}g(\omega;{\Bbb A})M$ for  any
invertible $M;$
\item[(P7)] $($Joint concavity$)$ $g(\omega;\lambda {\Bbb A}+(1-\lambda){\Bbb B})\geq \lambda g(\omega;{\Bbb
A})+(1-\lambda)g(\omega;{\Bbb B})$ for $0\leq \lambda \leq 1$;
\item[(P8)] $($Self-duality$)$
$ g(\omega;A_{1}^{-1} ,\dots, A_{n}^{-1})^{-1}= g(\omega;A_{1}
,\dots, A_{n});$
\item[(P9)]$($Determinental identity$)$
$ {\mathrm {Det}}g(\omega;{\Bbb A})= \prod_{i=1}^{n}({\mathrm
{Det}}A_{i})^{w_{i}};$ and
\item[(P10)] $($AGH weighted  mean inequalities$)$
$ (\sum_{i=1}^{n}w_{i}A_{i}^{-1})^{-1}\leq g(\omega;{\Bbb A}) \leq
\sum_{i=1}^{n}w_{i}A_{i}.$
\end{itemize}
We note that the two-variable weighted geometric mean
$\G_2(1-t,t;A,B)=A\#_{t}B, \ t\in [0,1],$ satisfies $(P1)-(P10).$

In their study of the symmetric least squares mean, Moakher \cite{Mo05} and Bhatia and Holbrook \cite{BH06}, \cite{BH06b}
have derived for it some of the axiomatic properties (P1)-(P10) satisfied by
the Ando-Li-Mathias geometric mean: consistency with scalars, joint
homogeneity, permutation invariance, congruence invariance, and self-duality (the last two being
true since congruence transformations and inversion are isometries).
Further, based on computational experimentation,
Bhatia and Holbrook conjectured monotonicity for the least squares mean (problem
19 in  ``Open problems in matrix theory" by X.\ Zhan \cite{Zh08}).
Providing a positive solution (Corollary \ref{C:mono}) to this conjecture was the original
motivation for this paper.

 In this paper we introduce the \emph{weighted least squares mean}
${\frak G}_{n}(\omega;A_{1},\dots,A_{n})$ of $(A_{1},\dots,A_{n})$
with the weight $\omega=(w_{1},\dots,w_{n})\in \Delta_{n}$, which  is
defined to be
\begin{equation}\label{E:1.1}
{\frak G}_{n}(\omega;A_{1},\dots,A_{n})=\mbox{arg}\min_{X\in {\Bbb
P}}\sum_{i=1}^n w_{i} \delta^2(X,A_i).
\end{equation}
 Computing appropriate derivatives as in
(\cite{Bh07,Mo05}) yields that the weighted least squares mean
coincides with the unique positive definite solution of the equation
\begin{equation}\label{E:least}\sum_{i=1}^{n}w_{i}\log(XA_{i}^{-1})=0.
\end{equation}
It is not difficult to see from (\ref{E:1.1}) and (\ref{E:least}) and some elementary
facts about matrices and the trace metric that the weighted least squares mean satisfies
$(P1)-(P3), (P6), (P8)$ and $(P9).$ In this paper we show that
the weighted least squares mean satisfies all the properties
$(P1)-(P10)$ by verifying all the additional properties (P4), (P5), (P7), and (P10).
As far as we know, this is the first verification of properties (P4) and (P7)
in both the weighted and unweighted cases and of (P10)
in the weighted case, the unweighted case having been shown by Yamazaki (\cite{Ya}).
 We thus see that the (weighted) least squares mean provides
another important example of a (weighted) geometric mean.   We
further show that the weighted least squares mean is non-expansive:
$\delta({\frak G}_{n}(\omega;A_{1},\dots,A_{n}),
{\frak G}_{n}(\omega;B_{1},\dots,B_{n}))\leq
\sum_{i=1}^{n}w_{i}\delta(A_{i},B_{i}).$

The main tools of the paper involve the theory of nonpositively curved metric spaces and techniques
from probability and random variable theory and the recent combination of the two, particularly
by K.-T. Sturm \cite{St03}.  Not only are these tools crucial for our developments, but also, we believe,
significantly enhance the potential usefulness of the least squares mean.

\section{Metric Spaces and Means}
The setting appropriate for our considerations is
that of \emph{globally nonpositively curved metric spaces}, which we call \emph{NPC spaces} spaces for short
(since we do not consider the locally nonpositively curved spaces).
These are complete metric spaces $M$ satisfying for all $x,y\in M$, there exists $m\in M$ such that for all $z\in M$
\begin{equation}\label{E:2.1}
d^2(m,z)\leq \frac{1}{2}d^2(x,z)+\frac{1}{2}d^2(y,z)-\frac{1}{4}d^2(x,y).
\end{equation}
Such spaces are also called (global) CAT$_0$-spaces or Hadamard spaces.
The theory of such spaces is quite extensive; see, e.g., \cite{Ba95}, \cite{BrH99}, \cite{Jo97}, \cite{St03}.
In particular the $m$ appearing in (\ref{E:2.1}) is the unique metric midpoint between
$x$ and $y$. By inductively choosing midpoints for dyadic rationals and extending
by continuity, one obtains for each $x\ne y$ a unique metric minimal geodesic $\gamma:[0,1] \to M$
satisfying $d(\gamma(t),\gamma(s))=\vert t-s\vert d(x,y)$.  We denote $\gamma(t)$ by
$x\#_t y$ and call it the $t$-\emph{weighted mean} of $x$ and $y$.  The midpoint $x\#_{1/2}y$
we denote simply as $x\#y$.  We remark that by uniqueness $x\#_t y=y\#_{1-t} x$; in particular,
$x\#y=y\#x$.
\begin{remark}\label{R:2.1}
Equation (\ref{E:2.1}) is sometimes referred to as the \emph{semiparallelogram law}, since it can derived from the parallelogram
law in Hilbert spaces by replacing the equality with an inequality (see \cite{LL01}).  It is satisfied by the length metric in any simply
connected nonpositively curved  Riemannian manifold \cite{Lang}.  Hence the metric definition represents a metric generalization of
nonpositive curvature.  The trace metric on the Riemannian symmetric space of positive definite matrices is a particular example
(\cite{Lang,LL01}).
\end{remark}

Equation (\ref{E:2.1}) admits a more general formulation in terms of the weighted mean (see e.g. \cite[Proposition 2.3]{St03}). For
all $0\leq t\leq 1$ we have
\begin{equation}\label{E:2.2}
d^2(x\#_t y,z)\leq (1-t)d^2(x,z)+td^2(y,z)-t(1-t)d^2(x,y).
\end{equation}

An $n$-\emph{mean} on a set $X$ is a function $\mu:X^n\to X$ satisfying the idempotency law
$\mu(x, x,\ldots,x)=x$.   It is \emph{symmetric} if it is invariant under all permutations $\sigma$ of
$\{1,\ldots,n\}$, i.e., $\mu(x_1,\ldots, x_n)=\mu(x_{\sigma(1)}, \ldots, x_{\sigma(n)})$.
For a metric space $X$ with weighted mean, the
operation $x\#_t y$ is a $2$-mean for each $t$.
A special case is the midpoint mean $x\# y$ for $t=1/2$, which is symmetric.

The problem of extending the geometric mean of two positive definite
matrices to an $n$-variable mean for $n\geq 3$ generalizes to the
setting of metric spaces with weighted means.  Under appropriate
metric hypotheses, all of which are implied by the NPC condition,
the symmetrization procedure applies and inductively yields
multivariable means extending $x\#y$ for each $n\geq 3$; see
Es-Sahib and Heinrich \cite{EH99} and the authors \cite{LL08}. The
weighted $2$-means and the mean of Bini, Meini, and Poloni
\cite{BMP10} also generalize to  NPC-spaces, and even weaker metric
settings \cite{LLL10}.

The least squares mean can be
immediately formulated in any metric space $(M,d)$:
\begin{equation}\label{E:2.3}
{\frak G}_n(a_1,\ldots, a_n)=\mbox{arg}\min_{z\in M}\sum_{i=1}^n d^2(z,a_i).
\end{equation}
In general this mean is not defined, since the minimizer may fail to exist or fail to be unique.
One also has a weighted version of the mean. Given
$(a_1,\ldots,a_n)\in M^n$, and positive real numbers $w_1,\ldots,w_n$ summing to $1$,
we define
\begin{equation}\label{E:2.3b}
{\frak G}_n(w_1,\ldots, w_n;a_1,\ldots, a_n):=\mbox{arg}\min_{z\in M}\sum_{i=1}^n w_id^2(z,a_i).
\end{equation}
provided the minimizer exists and is unique.  As mentioned previously, it was shown by
E.\ Cartan (see  \cite{Be03}) that this is
the case if the points all lie in a convex ball in a Riemannian
manifold. For our purposes, we note that existence and uniqueness holds in general for NPC
spaces as can be readily deduced from the uniform convexity of the
metric; see \cite[Propositions 1.7, 4.3]{St03}.  Note that the mean in
equation (\ref{E:2.3}) is a symmetric mean and the one in
(\ref{E:2.3b}) is permutation invariant in the sense of
 of property (P3) for weighted means given in the Introduction.
By taking $w_i=1/n$ for each $i=1,\ldots,n$, we see that the former mean is a special case of the latter,
so we work with the weighted case in what follows.
Although this mean is sometimes referred to as the Karcher mean in light of its
appearance in his work on Riemannian manifolds \cite{Ka77}, we will
refer to it as the \emph{weighted least squares mean}, or simply as the \emph{least squares mean}.

\section{The method}
Since our method of proof in this paper departs rather radically
from previous approaches to the theory of matrix means, we judge
that it is worthwhile to give a quick,  informal, and intuitive
overview of our approach and methods.  Suppose that we are given an
NPC metric space $(M,d)$, a tuple $(a_1,\ldots, a_n)\in M^n$, and a
weight $(w_1,\ldots, w_n)$ of positive real numbers. We imagine
carrying out a sequence of independent trials in which we randomly
choose in each trial an integer from the set  $\{1,\ldots,n\}$ in
such a way that $i$ is chosen with probability $w_i$.  If $i_k$ is
chosen on the $k^{th}$-trial, then we set $x_k=a_{i_k}$.   We define
a ``random walk" $\{s_k\}$ using this data by setting $s_1=x_1$,
$s_2=s_1\#x_2$, $s_3=s_2\#_{1/3} x_3$, and in general
$s_k=s_{k-1}\#_{1/k} x_k$, that is, at stage $k$ we move from
$s_{k-1}$ toward $x_k$ a fraction of $1/k$ of the distance between
them.  It is then a remarkable consequence of Sturm's Theorem 4.7 of
\cite{St03} that as we run through all possible outcomes of this
procedure, almost always the sequence $\{s_k\}_{k\in{\mathbb N}}$
will converge to ${\frak G}_n(w_1,\ldots,w_n;a_1,\ldots,a_n)$, the
weighted least squares mean.

This machinery provides a powerful tool for the study of the least squares mean.  Many properties of the
weighted $2$-means can be shown to extend to their finite iterations $s_n$, as defined in the previous paragraph,
and then shown to be preserved in passing to the limit, the weighted least squares mean.
It could also potentially be a useful computational tool to approximate
the weighted least squares mean by simulating the preceding random walk up through some stage $s_n$ for
large enough $n$.

For an $k$-tuple $(x_1,\ldots, x_k)\in M^{k}$, we can compute $s_k$
as defined in the first paragraph and use this value to define a
mean $S_k(x_1,\ldots,x_k)=s_k$.  In \cite{St03} Sturm has called
this mean the \emph{inductive mean} for NPC spaces, a mean which
appeared earlier in \cite{ST,AKL} for positive definite matrices.
Its explicit definition is given inductively  by $S_2(x,y)=x\# y$
and for $k\geq 3$, $S_{k}(x_1,\ldots, x_{k})=S_{k-1}(x_1,\ldots,
x_{k-1})\#_{\frac{1}{k}}x_{k}$.

\section{Random variables and barycenters}
In recent years significant portions of the classical theory of
real-valued random variables on a probability space have been
successfully generalized to the setting in which the random variables take values
in a metric space $M$. We quickly recall some of this theory as worked out, for
example, by Es-Sahib and Heinrich \cite{EH99} and particularly by Sturm \cite{St03}.

Let $(\Omega,\mathcal{A},\sigma)$
be a probability space: a set $\Omega$ equipped with a $\sigma$-algebra $\mathcal{A}$ of subsets,
and a $\sigma$-additive probability measure $\sigma$ on $\mathcal{A}$. We typically write the measure or
probability of $A\in\mathcal{A}$ by $P(A)$ instead of $\sigma(A)$.  For a metric space $(M,d)$,
an $M$-\emph{valued random variable} is a function $X:\Omega\to M$ which is measurable in the sense
that $X^{-1}(B)\in\mathcal{A}$ for every Borel subset of $M$.  We further impose the technically useful assumption
that the image $X(\Omega)$ is a separable subspace of $M$.

The push-forward of the measure $\sigma$ by $X$ is denoted $q_X$ and defined by $q_X(B)=\sigma(X^{-1}(B))$ for
each Borel subset $B$ of $M$.  It is a probability measure on the Borel sets of $M$ and is called the \emph{distribution}
of $X$.  A sequence of random variables $\{X_n\}$ is \emph{identically distributed} (i.d.) if all have the same distribution.
For any $q_X$-integrable function $\phi:M\to\R$, one has the basic formula $\int_M \phi \,dq_X=\int_\Omega \phi X
\, d\sigma$.

A collection of random variables $\{X_i:i\in I\}$ is \emph{independent} if for every finite $F\subseteq I$ of cardinality
at least two, $P(\bigcap_{i\in F} X_i^{-1}(B_i))=\prod_{i\in F}P(X_i^{-1}(B_i))$, where $\{B_i:i\in I\}$ is a collection
of Borel subsets of $M$.  A sequence $\{X_n\}$ is i.i.d. if it is both independent and identically distributed.

Assume henceforth that $M$ is an NPC-space.  Let $\PP(M)$ denote the set of
probability measures with separable support on $(M,\mathcal{B}(M))$,
where $\mathcal{B}(M)$ is the collection of Borel sets. We define the
collection $\PP^\theta(M)$ of probability measures $q\in\PP(M)$
to be those satisfying $\int_M
d^\theta(z,x)q(dx)<\infty$ for all $z\in M$. Members of $\PP^1(M)$
are called \emph{integrable} and those in $\PP^2(M)$ are called
\emph{square integrable}. We define a random variable $X:\Omega\to
M$ to be in $L^\theta$ if its distribution $q_X\in\PP^\theta(M)$. In
particular, it is integrable if $\int_\Omega
d(z,X(\omega))\sigma(d\omega)=\int_M d(z,x) q_X(dx)<\infty$ for
$z\in M$.  We define a sequence $\{X_n\}$ of random variables to be
\emph{uniformly bounded} if there exists $z\in M$ and $R>0$ such
that the image of each $X_n$ lies within the ball around $z$ of
radius $R$.

Following Sturm \cite{St03}, we define the \emph{barycenter} $b(q)$ of $q\in\PP^1(M)$ by
\begin{equation}\label{E:3.1}
b(q)=\mbox{arg}\min_{z\in M}\int_M [d^2(z,x)-d^2(y,x)]q(dx).
\end{equation}
Sturm uses the uniform convexity of the metric to show that
independent of $y$ there is a unique $z=b(q)$, the barycenter (by
definition), at which this minimum is obtained \cite[Proposition
4.3]{St03}, and that for the case that $q$ is square integrable the
barycenter can be alternatively characterized by
\begin{equation}\label{E:3.2}
b(q)=\mbox{arg}\min_z \int_M d^2(z, x)q(dx).
\end{equation}
\begin{remark}\label{R:3.3}
For the case that $q=\sum_{i=1}^n w_i\delta_{x_i}$, where $(w_1,\ldots,w_n)$ is a weight and
$\delta_{x_i}$ is the point mass at $x_i$, we have
$$b(q)=\mbox{arg}\inf_z \int_M d^2(z,x) q(dx)=\mbox{arg}\inf_z\sum_{i=1}^n w_id^2(z,x_i)={\frak G}_n(w_1,\ldots,w_n;x_1,\ldots,x_n).$$
Thus in this case $q$ is square integrable and its barycenter $b(q)$ agrees with the weighted least squares mean of $(x_1,\ldots,x_n)$.
\end{remark}

For $X:\Omega\to M$ integrable, we define its \emph{expected value} $EX$ by
\begin{eqnarray}\label{E:3.3}
EX&=&\mbox{arg}\inf_{z\in M}\int_\Omega \left[d^2(z,X(\omega))-d^2(y,X(\omega))\right]\sigma(d\omega)\\
&=&\mbox{arg}\min_z \int_M [d^2(z, x)-d^2(y,x)]q_X(dx)=b(q_X).  \notag
\end{eqnarray}
From this definition it is clear that integrable i.d.\ random variables have the same expectation.

It is also possible to define and prove notions of a Law of Large Numbers for a sequence of i.i.d.\ random variables
into a metric space $M$.  Let $\{X_n:n\in\mathbb{N}\}$ be a sequence of independent, identically distributed random variables on
some probability space $(\Omega,\mathcal{A},\sigma)$ into $M$.    Let $\mu_n$ be an $n$-mean
on $M$ for each $n$, for example one obtained by the symmetrization procedure or least squares.  We use these
means to form the ``average" $Y_n$ of  the given random variables according to the rule
$Y_n(\omega):=\mu_n(X_1(\omega),\ldots,X_n(\omega))$.  Now under suitable hypotheses Es-Sahib and Heinrich
\cite{EH99} and Sturm \cite{St03} show that a strong law of large numbers is satisfied, that is, the $Y_n$ converge pointwise
a.e.\ to a common point $b$.  The principal result of Sturm \cite[Theorem 4.7]{St03} is crucial for our purposes.
\begin{theorem}\label{T:Sturm}
Let $\{X_n\}_{n\in\mathbb{N}}$ be a sequence of uniformly bounded i.i.d.\ random variables from a probability space
$(\Omega,\mathcal{A},\sigma)$ into an NPC space $M$.  Let $S_n$ denote the inductive mean for each $n\geq 2$, and
set $Y_n(\omega)=S_n(X_1(\omega),\ldots,X_n(\omega))$.  Then $Y_n(\omega)\to EX_1$ as $n\to\infty$ for almost all $\omega\in \Omega$.
\end{theorem}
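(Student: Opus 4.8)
The plan is to run a ``bias plus variance'' argument adapted to the NPC setting, using only the weighted inequality (\ref{E:2.2}) together with the minimality built into the definition of the barycenter, and then to upgrade the resulting $L^2$ estimate to almost sure convergence by exhibiting a nearly monotone supermartingale.

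First I would fix notation. Set $b:=EX_1=b(q_{X_1})$ and $V:=\int_M d^2(b,x)\,q_{X_1}(dx)$. By hypothesis there is a ball $\bar B(z,R)$ containing every $X_n(\omega)$, and since closed balls in an NPC space are convex — take the center for the free point in (\ref{E:2.2}) — an induction on the recursion $S_k(x_1,\dots,x_k)=S_{k-1}(x_1,\dots,x_{k-1})\#_{1/k}x_k$ shows $Y_n(\omega)\in\bar B(z,R)$ for all $n$. Hence $d^2(Y_n,b)\le(R+d(z,b))^2$ is bounded, $V\le\int_M d^2(z,x)\,q_{X_1}(dx)<\infty$, and every random variable below is bounded, so integrable and square integrable. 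By (\ref{E:3.2}) and (\ref{E:3.3}) we have $V=\min_{w\in M}\int_M d^2(w,x)\,q_{X_1}(dx)$, so
$$\int_M d^2(w,x)\,q_{X_1}(dx)\ \ge\ V\qquad\text{for every } w\in M.$$
Let $\mathcal F_m:=\sigma(X_1,\dots,X_m)$ and $Y_1:=X_1$. Since $Y_n=Y_{n-1}\#_{1/n}X_n$ for all $n\ge 2$, applying (\ref{E:2.2}) pointwise with $t=1/n$, $x=Y_{n-1}$, $y=X_n$, $z=b$ gives
$$d^2(Y_n,b)\ \le\ \tfrac{n-1}{n}\,d^2(Y_{n-1},b)+\tfrac1n\,d^2(X_n,b)-\tfrac{n-1}{n^2}\,d^2(Y_{n-1},X_n).$$
Now I would condition on $\mathcal F_{n-1}$. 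As $X_n$ is independent of $\mathcal F_{n-1}$ with distribution $q_{X_1}$ while $Y_{n-1}$ is $\mathcal F_{n-1}$-measurable, one gets $E[d^2(X_n,b)\mid\mathcal F_{n-1}]=V$ and $E[d^2(Y_{n-1},X_n)\mid\mathcal F_{n-1}]=\int_M d^2(Y_{n-1},x)\,q_{X_1}(dx)\ge V$, so the last two terms together contribute at most $\tfrac1n V-\tfrac{n-1}{n^2}V=\tfrac{V}{n^2}$ and
$$E[d^2(Y_n,b)\mid\mathcal F_{n-1}]\ \le\ \tfrac{n-1}{n}\,d^2(Y_{n-1},b)+\tfrac{V}{n^2}\ \le\ d^2(Y_{n-1},b)+\tfrac{V}{n^2}.$$

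From the first of these two bounds, taking full expectations and writing $v_n:=E[d^2(Y_n,b)]$ yields $n v_n\le (n-1)v_{n-1}+V/n$; telescoping from $v_1=V$ gives $v_n\le V H_n/n\to 0$, where $H_n=\sum_{j=1}^n 1/j$. This already establishes $L^2$, hence in-probability, convergence of $Y_n$ to $b$. From the second bound, the process $W_n:=d^2(Y_n,b)-V\sum_{j=2}^n j^{-2}$ is $\{\mathcal F_n\}$-adapted, satisfies $E[W_n\mid\mathcal F_{n-1}]\le W_{n-1}$, and is bounded below by $-V(\pi^2/6-1)$; being an $L^1$-bounded supermartingale it converges almost surely to an integrable limit $W_\infty$. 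Therefore $d^2(Y_n,b)\to W_\infty+V(\pi^2/6-1)=:L\ge 0$ almost surely, and Fatou's lemma gives $E[L]\le\liminf_n v_n=0$, forcing $L=0$ a.s. Hence $Y_n\to b=EX_1$ almost surely.

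The steps that are routine but must still be checked carefully are: the convexity of NPC balls and the induction that confines every $Y_n$ to a fixed ball, which is exactly where uniform boundedness is used and which secures square integrability; the conditioning identity $E[f(Y_{n-1},X_n)\mid\mathcal F_{n-1}]=\int_M f(Y_{n-1},x)\,q_{X_1}(dx)$, resting on independence of $X_n$ from $\mathcal F_{n-1}$ and on joint measurability; and the measurability and adaptedness of $n\mapsto Y_n$ needed to invoke the supermartingale convergence theorem. The only genuinely creative point — and hence the crux — is the passage from the soft $L^2$ decay to almost sure convergence: after conditioning, (\ref{E:2.2}) shows $d^2(Y_n,b)$ to be a supermartingale up to the summable deterministic correction $V\sum_{j\le n}j^{-2}$, and this observation settles almost sure convergence with no maximal inequality needed.
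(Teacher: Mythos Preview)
Your argument is correct and is essentially Sturm's own proof of his Theorem~4.7 in \cite{St03}: the NPC inequality (\ref{E:2.2}) applied to the recursion $Y_n=Y_{n-1}\#_{1/n}X_n$, the conditional bound using the variational characterization of the barycenter, the $L^2$ estimate $v_n\le VH_n/n$, and the supermartingale trick to upgrade to almost sure convergence are exactly the ingredients he uses. The present paper, however, does not give any proof of this statement; it simply quotes it as ``the principal result of Sturm \cite[Theorem~4.7]{St03}'' and uses it as a black box. So there is nothing in the paper to compare your argument against---you have supplied the proof the authors chose to cite rather than reproduce.
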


\section{A basic construction}
In this section we specialize Theorem \ref{T:Sturm} to the case of finitely supported
probability measures, the case of interest to us.
We first recall a standard construction of probability
theory.  For each $k\in\mathbb{N}$, let $\Omega_k$ denote a copy of
an $n$-element set labelled $\{\xi_1,\ldots,\xi_n\}$  equipped with the probability
measure $\sigma_k=\sum_{i=1}^n w_i\delta_{\xi_i}$, where $(w_1,\ldots,w_n)$ is
a weight.  We set
$\Omega=\prod_{k=1}^\infty \Omega_k$.  We call a subset of $\Omega$
a \emph{box} if it is of the form $A=\prod_{k=1}^\infty A_k$, where
$\emptyset\ne A_k\subseteq \Omega_k$ and set
$\sigma(A)=\prod_{k=1}^\infty \sigma_k(A_k)$.  It is a standard
result of measure theory that $\sigma$ uniquely extends to a
probability measure, called the \emph{product measure} and again
denoted $\sigma$, on the $\sigma$-algebra  $\mathcal{A}$ generated
by the boxes.

Now let $(M,d)$ be an NPC space, let $\{x_1,\ldots,x_n\}\subseteq
M$, and let $\Bw=(w_1,\ldots,w_n)$ be a weight.  For each positive
integer $k$, let $X_k:\Omega \to M$ be  defined by $X_k(\omega)=x_i$
if $\pi_k(\omega)=\xi_i$, where $\pi_k:\Omega\to\Omega_k$ is
projection into the $k^{th}$-coordinate.   It is straightforward to
verify from the definition of the product measure that the sequence
$\{X_k\}$ of random variables is independent.  Furthermore, each
$X_k$ has distribution $\sum_{i=1}^n w_i\delta_{x_i}$, and hence the
sequences are identically distributed.

We define $Y_k:\Omega\to M$ for each $k$ by
$Y_k(\omega)=S_k(X_1(\omega),\ldots,X_k(\omega))$, where $S_k$ is
the inductive mean.    By Theorem \ref{T:Sturm} we have that
$\lim_{k\to\infty} Y_k(\omega)= EX_1=b(q_{X_1})$ a.e. From Remark
\ref{R:3.3} it follows that that $\lim_{k\to\infty} Y_k(\omega)
={\frak G}_n(\Bw;x_1,\ldots, x_n)$
a.e.  We summarize this special case of Theorem \ref{T:Sturm}.
\begin{corollary}\label{C:Sturm}
Let $(M,d)$ be an NPC space, let $\{x_1,\ldots,x_n\}\subseteq M$,
and let $\Bw=(w_1,\ldots,w_n)$ be a weight.
Then $\lim_{k\to\infty} Y_k(\omega)={\frak G}_n(\Bw;x_1,\ldots, x_n)$ a.e.\
for the $\{Y_k\}$ given in the preceding construction.
\end{corollary}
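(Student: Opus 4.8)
The plan is to check that the sequence $\{X_k\}$ produced in the basic construction satisfies the hypotheses of Sturm's theorem (Theorem \ref{T:Sturm}) and then to identify the resulting limit with the weighted least squares mean. First I would recall what the construction already gives: independence of the $X_k$ follows from the product structure of $\sigma$ on $\Omega=\prod_k\Omega_k$ (cylinder probabilities factor), and each $X_k$ has distribution $q_{X_k}=\sum_{i=1}^n w_i\delta_{x_i}$, so the $X_k$ are identically distributed; hence $\{X_k\}$ is i.i.d. Moreover each $X_k$ takes values in the finite set $\{x_1,\ldots,x_n\}$, so its image is trivially separable and, fixing any $z\in M$ and setting $R=1+\max_i d(z,x_i)$, every $X_k$ maps into the ball of radius $R$ about $z$; thus $\{X_k\}$ is \emph{uniformly bounded}. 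Therefore Theorem \ref{T:Sturm} applies directly: with $Y_k(\omega)=S_k(X_1(\omega),\ldots,X_k(\omega))$ for the inductive mean $S_k$, we get $Y_k(\omega)\to EX_1$ as $k\to\infty$ for $\sigma$-almost every $\omega\in\Omega$.

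It then remains to identify $EX_1$. By the definition of expected value in equation (\ref{E:3.3}), $EX_1=b(q_{X_1})$, the barycenter of the distribution $q_{X_1}=\sum_{i=1}^n w_i\delta_{x_i}$. This measure is finitely supported, hence square integrable, and Remark \ref{R:3.3} evaluates its barycenter explicitly:
$$b(q_{X_1})=\mbox{arg}\min_{z\in M}\sum_{i=1}^n w_i d^2(z,x_i)={\frak G}_n(\Bw;x_1,\ldots,x_n).$$
Combining the two displays gives $\lim_{k\to\infty}Y_k(\omega)={\frak G}_n(\Bw;x_1,\ldots,x_n)$ for almost all $\omega$, which is exactly the assertion of the corollary.

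I do not expect a genuine obstacle here: the substantive content — the strong law of large numbers for the inductive mean on an NPC space — is entirely contained in Theorem \ref{T:Sturm}, and the corollary is essentially its transcription to the case of a finitely supported distribution. The only points that need explicit mention are the verification of uniform boundedness (immediate, since the $X_k$ assume finitely many values) and the identification of the barycenter of a convex combination of point masses with the weighted least squares mean, and both of these were already prepared in Sections 4 and 5. If anything deserves a sentence of care, it is simply flagging that Theorem \ref{T:Sturm}'s boundedness hypothesis is met, since it was not spelled out when the $X_k$ were introduced.
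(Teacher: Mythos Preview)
Your proposal is correct and follows essentially the same route as the paper: apply Theorem \ref{T:Sturm} to the i.i.d.\ sequence $\{X_k\}$ built in the basic construction, then identify $EX_1=b(q_{X_1})$ with ${\frak G}_n(\Bw;x_1,\ldots,x_n)$ via Remark \ref{R:3.3}. Your explicit verification of uniform boundedness (finite range, hence contained in a fixed ball) is a small but welcome addition, since the paper invokes Theorem \ref{T:Sturm} without spelling that hypothesis out.
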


We consider a basic example.
\begin{proposition} \label{P:hilb} Let $\mathcal{H}$ be a Hilbert space endowed with
the metric induced by the inner product.  Then
\begin{itemize}
\item[(i)] $\mathcal{H}$ is an NPC space.
\item[(ii)] The binary $t$-weighted mean of $x$ and $y$ is given by $(1-t)x+ty$.
\item[(iii)]The inductive mean is given by  $S_n(x_1,\ldots,x_n)=\sum_{i=1}^n (1/n)x_i$.
\item[(iv)]  The weighted least squares mean for weight $\Bw=(w_1,\ldots, w_n)$ is given
by ${\frak G}_n(\Bw;x_1,\ldots,x_n)=\sum_{i=1}^n w_ix_i$.
\item[(v)] For $\{X_k\}_{k\in\mathbb{N}}$ and a weight $\Bw=(w_1,\ldots,w_n)$ as
given in the preceding construction, we have a.e.
$$\lim_{k\to\infty} \sum_{i=1}^k (1/k) X_i(\omega)\to  {\frak G}_n(\Bw;x_1,\ldots,x_n)=\sum_{i=1}^n w_ix_i.$$
\end{itemize}
\end{proposition}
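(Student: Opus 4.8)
The plan is to verify the five assertions essentially by reducing everything to the linear structure of $\mathcal{H}$, so that the nonlinear machinery of the preceding sections collapses to classical facts. First I would prove (i) by a direct computation: given $x,y\in\mathcal{H}$, take $m=(x+y)/2$ and expand $\|m-z\|^2$ using the inner product; the parallelogram law $\|a+b\|^2+\|a-b\|^2=2\|a\|^2+2\|b\|^2$ applied to $a=x-z$, $b=y-z$ gives exactly equality in \eqref{E:2.1}, and completeness of $\mathcal{H}$ is part of the hypothesis (a Hilbert space is by definition a complete inner product space). So $\mathcal{H}$ is an NPC space, and in fact the semiparallelogram law holds with equality.

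Next, for (ii), I would note that $t\mapsto (1-t)x+ty$ is a curve from $x$ to $y$ with $\|((1-t)x+ty)-((1-s)x+sy)\|=|t-s|\,\|x-y\|$, so it is a unit-speed (up to the factor $\|x-y\|$) minimal geodesic; by the uniqueness statement recorded after \eqref{E:2.1}, this curve must be $x\#_t y$. Part (iii) then follows by an easy induction on $n$ using the inductive mean's recursion $S_k(x_1,\ldots,x_k)=S_{k-1}(x_1,\ldots,x_{k-1})\#_{1/k}x_k$: the base case $S_2(x,y)=x\#y=(x+y)/2$ is (ii) with $t=1/2$, and if $S_{k-1}(x_1,\ldots,x_{k-1})=\frac1{k-1}\sum_{i=1}^{k-1}x_i$, then applying (ii) with $t=1/k$ gives $S_k=\frac{k-1}{k}\cdot\frac1{k-1}\sum_{i=1}^{k-1}x_i+\frac1k x_k=\frac1k\sum_{i=1}^k x_i$.

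For (iv), I would use the characterization \eqref{E:3.2}: since $q=\sum_i w_i\delta_{x_i}$ is square integrable, ${\frak G}_n(\Bw;x_1,\ldots,x_n)=b(q)=\operatorname{arg\,min}_z\sum_i w_i\|z-x_i\|^2$. Expanding $\sum_i w_i\|z-x_i\|^2=\|z\|^2-2\langle z,\sum_i w_ix_i\rangle+\sum_i w_i\|x_i\|^2$ and completing the square shows the unique minimizer is $z=\sum_i w_ix_i$; alternatively one checks that $\sum_i w_i\log$-type first-order condition degenerates here to $\sum_i w_i(z-x_i)=0$. Finally (v) is immediate: Corollary \ref{C:Sturm} gives $\lim_k Y_k(\omega)={\frak G}_n(\Bw;x_1,\ldots,x_n)$ a.e., and by (iii) $Y_k(\omega)=S_k(X_1(\omega),\ldots,X_k(\omega))=\frac1k\sum_{i=1}^k X_i(\omega)$, while (iv) identifies the limit as $\sum_i w_ix_i$. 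I do not anticipate a genuine obstacle here; the only point requiring a little care is making sure the geodesic-uniqueness invoked in (ii) is legitimately the one established after \eqref{E:2.1}, and that the square-integrability hypothesis needed to pass from \eqref{E:3.1} to \eqref{E:3.2} is satisfied — both are routine, the latter being noted already in Remark \ref{R:3.3}.
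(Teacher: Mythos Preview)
Your proposal is correct and for parts (i), (ii), (iii), and (v) it matches the paper's proof essentially line for line.

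The only genuine difference is in part (iv). The paper identifies the barycenter by invoking an external result, \cite[Proposition 5.4]{St03}, which states that in a Hilbert space $\langle b(q),y\rangle=\int_{\mathcal H}\langle x,y\rangle\,q(dx)$ for all $y$; applied to $q=\sum_i w_i\delta_{x_i}$ this gives $\langle b(q),y\rangle=\langle \sum_i w_ix_i,y\rangle$ for all $y$, hence $b(q)=\sum_i w_ix_i$. You instead expand $\sum_i w_i\|z-x_i\|^2$ directly and complete the square. Your route is more elementary and self-contained (no appeal to Sturm's paper is needed), while the paper's route has the mild advantage of exhibiting the identity $\langle b(q),y\rangle=\int\langle x,y\rangle\,dq$ that generalizes beyond finitely supported measures. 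Both are perfectly valid here.
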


\begin{proof} (i) It is standard that Hilbert spaces satisfy the parallelogram law, hence the semiparallelogram law $(\ref{E:2.1})$,
and hence are NPC spaces (see e.g.\ \cite[Proposition 3.5]{St03}).

(ii) The map on $[0,1]$ given by $t\mapsto (1-t)x+ty$ is a metric geodesic taking $0$ to $x$ and $1$ to $y$.
Since such geodesics are unique in NPC spaces, it must give the $t$-weighted mean.

(iii) By definition and induction $$S_n(x_1,\ldots,x_n)=\frac{n-1}{n}S_{n-1}(x_1,\ldots, x_{n-1})+
\frac{1}{n} x_n=\frac{n-1}{n}\sum_{i=1}^{n-1}\frac{1}{n-1}x_i+ \frac{1}{n} x_n=\sum_{i=1}^n\frac{1}{n} x_i.$$

(iv) Consider the measure $q=\sum_{i=1}^nw_i\delta_{x_i}$.   Then for any $y\in\mathcal{H}$,
$$\left\langle {\frak G}_n(\Bw;x_1,\ldots, x_n),y\right\rangle=\langle b(q),y\rangle=\int_{{\mathcal H}} \langle x,y\rangle q(dx)=
\sum_{i=1}^n w_i\langle x_i,y\rangle=\left\langle \sum_{i=1}^n w_ix_i,y\right\rangle,$$
where the first equality follows from Remark \ref{R:3.3} and the second is the content of \cite[Proposition 5.4]{St03}.
The conclusion of (iv) is now immediate.

(v) In the earlier construction of this section we have $Y_k(\omega)=\sum_{i=1}^k\frac{1}{k} X_i(\omega)$ by
part (iii). The conclusion of (v) then follows from Corollary \ref{C:Sturm} and (iv).
\end{proof}

\section{Monotonicity and Loewner-Heinz NPC spaces}
The fundamental Loewner-Heinz inequality for positive definite matrices asserts that $A^{1/2}\leq B^{1/2}$ whenever
$A\leq B$.  This can be written alternatively as $A\#I\leq B\#I$ whenever $A\leq B$ and extends to the equivalent
monotonicity property that $A_1\# A_2\leq B_1\# B_2$ whenever $A_1\leq  B_1$ and $A_2\leq B_2$.  These considerations
motivate the next definition.
\begin{definition}
A \emph{Loewner-Heinz NPC space} is an NPC space equipped with a
closed partial order $\leq$  satisfying $x_1\#x_2\leq y_1\#y_2$
whenever $x_i\leq y_i$ for $i=1,2$.

\end{definition}
A mean $\mu:M^n\to M$ on a partially ordered metric space is called
\emph{order-preserving} or \emph{monotonic}
if $x_i\leq y_i$ for $i=1,\ldots, n$ implies $\mu(x_1,\ldots, x_n)\leq
\mu(y_1,\ldots, y_n)$.

\begin{lemma}\label{L:4.1} The inductive mean $S_{n}$ on a Loewner-Heinz
NPC space  is monotonic for every $n\geq 2$.
\end{lemma}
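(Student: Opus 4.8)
The plan is to reduce the statement to a monotonicity property of the binary weighted means $\#_t$ and then induct on $n$. Concretely, I would first prove the auxiliary claim that in any Loewner–Heinz NPC space one has $x_1\#_t x_2\le y_1\#_t y_2$ for \emph{every} $t\in[0,1]$ whenever $x_i\le y_i$ for $i=1,2$; the hypothesis built into the definition is exactly the special case $t=1/2$, and the case $t=1/k$ for arbitrary $k$ is what the inductive mean actually requires.

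To prove the auxiliary claim, let $D\subseteq[0,1]$ be the set of parameters $t$ for which the displayed implication holds. Clearly $0,1\in D$ (since $x\#_0 y=x$ and $x\#_1 y=y$) and $1/2\in D$ by hypothesis. The key observation is that a subarc of a minimal geodesic is again a minimal geodesic, so by the uniqueness of geodesics and of midpoints in an NPC space one has the identity $x\#_{(s+t)/2}y=(x\#_s y)\#(x\#_t y)$ for all $s,t\in[0,1]$. Hence if $s,t\in D$ and $x_i\le y_i$, then $x_1\#_s x_2\le y_1\#_s y_2$ and $x_1\#_t x_2\le y_1\#_t y_2$, and applying the defining Loewner–Heinz property to these two inequalities yields $(x_1\#_s x_2)\#(x_1\#_t x_2)\le (y_1\#_s y_2)\#(y_1\#_t y_2)$, i.e. $x_1\#_{(s+t)/2}x_2\le y_1\#_{(s+t)/2}y_2$; thus $(s+t)/2\in D$. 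Starting from $\{0,1/2,1\}$ and closing under this midpoint operation shows that $D$ contains every dyadic rational in $[0,1]$. Finally, since $t\mapsto x\#_t y$ is continuous (indeed $d(x\#_s y,x\#_t y)=|s-t|\,d(x,y)$) and the order $\le$ is closed in $M\times M$, the set $D$ is closed; being dense, $D=[0,1]$.

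With the auxiliary claim in hand, monotonicity of $S_n$ follows by induction on $n$. For $n=2$, $S_2(x_1,x_2)=x_1\#x_2$ and the conclusion is the $t=1/2$ case just established. Assume $S_{k-1}$ is monotonic and suppose $x_i\le y_i$ for $1\le i\le k$. Put $u=S_{k-1}(x_1,\dots,x_{k-1})$ and $v=S_{k-1}(y_1,\dots,y_{k-1})$; by the inductive hypothesis $u\le v$, and also $x_k\le y_k$. Then
$$S_k(x_1,\dots,x_k)=u\#_{1/k}x_k\le v\#_{1/k}y_k=S_k(y_1,\dots,y_k)$$
by the auxiliary claim applied with $t=1/k$, completing the induction.

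The main obstacle is the auxiliary claim, and within it the treatment of the non-dyadic parameters $t=1/k$, $k\ge 3$. The Loewner–Heinz NPC hypothesis directly controls only midpoints, so monotonicity of $\#_t$ for dyadic $t$ comes essentially for free by iterating the hypothesis through the midpoint identity; extending to all $t\in[0,1]$—which is precisely what is forced on us because $1/k$ is not dyadic for $k\ge 3$—is where the continuity of the geodesic parametrization together with the closedness of $\le$ is indispensable. The identity $x\#_{(s+t)/2}y=(x\#_s y)\#(x\#_t y)$ is the technical heart, but it is a standard consequence of the uniqueness of minimal geodesics and of metric midpoints in NPC spaces noted in Section 2.
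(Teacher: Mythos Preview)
Your proof is correct and follows essentially the same approach as the paper: first extend monotonicity of $\#$ to $\#_t$ for all $t\in[0,1]$ via dyadic rationals and a continuity/closedness argument, then induct on $n$ using $S_k(x_1,\ldots,x_k)=S_{k-1}(x_1,\ldots,x_{k-1})\#_{1/k}x_k$. Your write-up simply makes explicit the ``standard argument'' the paper invokes, including the midpoint identity $x\#_{(s+t)/2}y=(x\#_s y)\#(x\#_t y)$ and the closedness-of-$D$ step.
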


\begin{proof} We first observe that $x_1\#_t x_2\leq y_1\#_t y_2$ whenever
$x_i\leq y_i$ for $i=1,2$ by the standard argument of extending the
inequality to the dyadic weighted means by induction for the case of
the dyadic rationals, and then extending to general $t\in [0,1]$ by
continuity in $t$ and the closedness of the relation $\leq$.
Assuming that the inductive $k$-mean $S_k$ is monotonic, it follows
that $S_{k+1}(x_1,\ldots, x_{k+1})=S_k(x_1,\ldots,
x_k)\#_{\frac{1}{k+1}} x_{k+1}$ is monotonic since $S_k$ and the
$t$-weighted mean both are.
\end{proof}

\begin{theorem}\label{T:mono}
Let $(M,d,\leq)$ be a Loewner-Heinz NPC space.  Then for a fixed weight $\Bw=(w_1,\ldots,w_n)$
the weighted least squares mean ${\frak G}_n$ is monotonic for $n\geq 2$.
\end{theorem}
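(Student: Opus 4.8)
The plan is to exploit the probabilistic approximation of the least squares mean by the inductive means, combined with the monotonicity of the inductive means already obtained in Lemma \ref{L:4.1}. The essential device is to run the random-walk construction of Section 5 \emph{simultaneously} for the two tuples $(x_1,\ldots,x_n)$ and $(y_1,\ldots,y_n)$ on the \emph{same} probability space, using the \emph{same} coordinate projections, so that the two approximating sequences are coupled and remain comparable at every stage.

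Concretely, assume $x_i\le y_i$ for $i=1,\ldots,n$. Form $\Omega=\prod_{k=1}^\infty\Omega_k$ with the product measure $\sigma$ determined by the weight $\Bw$, exactly as in the construction of Section 5. Define $X_k,X_k':\Omega\to M$ by $X_k(\omega)=x_i$ and $X_k'(\omega)=y_i$ whenever $\pi_k(\omega)=\xi_i$. Then for every $\omega\in\Omega$ and every $k$ one has $X_k(\omega)\le X_k'(\omega)$, since both values are indexed by the same letter $\xi_i$ picked out by the $k^{th}$ coordinate. Put $Y_k(\omega)=S_k(X_1(\omega),\ldots,X_k(\omega))$ and $Y_k'(\omega)=S_k(X_1'(\omega),\ldots,X_k'(\omega))$. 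By Lemma \ref{L:4.1} the inductive mean $S_k$ is monotonic, so $Y_k(\omega)\le Y_k'(\omega)$ for all $k$ and all $\omega$.

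Now apply Corollary \ref{C:Sturm} to each of the two families (note the hypotheses hold since each $X_k$, resp.\ $X_k'$, takes only the finitely many values $x_1,\ldots,x_n$, resp.\ $y_1,\ldots,y_n$): there is a set of full $\sigma$-measure on which $Y_k(\omega)\to{\frak G}_n(\Bw;x_1,\ldots,x_n)$, and a set of full measure on which $Y_k'(\omega)\to{\frak G}_n(\Bw;y_1,\ldots,y_n)$. Their intersection again has full measure, hence is nonempty; fix $\omega$ in it. Passing to the limit in the relations $Y_k(\omega)\le Y_k'(\omega)$ and using that $\le$ is closed (part of the definition of a Loewner-Heinz NPC space), we conclude ${\frak G}_n(\Bw;x_1,\ldots,x_n)\le{\frak G}_n(\Bw;y_1,\ldots,y_n)$, which is the asserted monotonicity.

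There is essentially no analytic obstacle here once the machinery of Sections 4 and 5 is available: the proof is an appeal to Lemma \ref{L:4.1}, Corollary \ref{C:Sturm}, and the closedness of $\le$. The one genuinely new point is the coupling idea — that both approximating sequences should be built from a single sequence of trials on a common $\Omega$, so that the pointwise inequality $X_k(\omega)\le X_k'(\omega)$ is available stage by stage — together with the trivial remark that two full-measure events intersect.
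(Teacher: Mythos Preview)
Your proof is correct and is essentially identical to the paper's own argument: the paper likewise builds both sequences $\{X_k\}$ and $\{\tilde X_k\}$ on the same product space $\Omega$, invokes Lemma \ref{L:4.1} to get $Y_k(\omega)\le\tilde Y_k(\omega)$ pointwise, applies Corollary \ref{C:Sturm} to each, and passes to the limit using closedness of the order on the intersection of the two full-measure sets. The coupling device you highlight as the ``one genuinely new point'' is exactly what the paper does as well.
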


\begin{proof}  Assuming $x_i\leq y_i$ for $1\leq i\leq n$, we show ${\frak G}_n(\Bw;x_1,\ldots, x_n)\leq
{\frak G}_n(\Bw;y_1,\ldots, y_n)$, where ${\frak G}_n$ is the least squares
mean on $M^n$.  Let $\Omega_k$ be a copy of the $n$-element set
$\{\xi_1,...,\xi_n\}$ equipped with the measure $\sum_{i=1}^n
w_i\delta_{\xi_i}$.  Let $\Omega=\prod_{k=1}^\infty \Omega_k$ be
the countable product of the $\Omega_k$ with the product measure.
Let $X_k:\Omega \to M$ be  defined by $X_k(\omega)=x_i$ if
$\pi_k(\omega)=\xi_i$, where $\pi_k:\Omega\to\Omega_k$ is projection
into the $k^{th}$-coordinate.  Similarly we define $\tilde
X_k:\Omega\to M$ by $\tilde X_k(\omega)= y_i$ if
$\pi_k(\omega)=\xi_i$. As we have seen in the previous section
$\{X_k\}$ is i.i.d.\ with distribution  $\sum_{i=1}^n w_i\delta_{x_i}$, while
$\{\tilde
X_k\}$ is i.i.d.\ with  distribution $\sum_{i=1}^n w_i\delta_{y_i}$.
 Finally we note that
$(X_1(\omega),\ldots,X_k(\omega))$ is coordinatewise less than or
equal to $(\tilde X_1(\omega),\ldots,\tilde X_k(\omega))$ since
$x_i\leq y_i$ for each $i=1,\ldots, n$.

We define $Y_k, \tilde Y_k:\Omega\to M$ by
$Y_k(\omega)=S_k(X_1(\omega),\ldots,X_k(\omega))$ and
 $\tilde Y_k(\omega)=\newline S_k(\tilde X_1(\omega),\ldots,\tilde X_k(\omega))$.  It follows from Lemma
\ref{L:4.1}  that $Y_k(\omega)\leq\tilde Y_k(\omega)$ for each
$\omega\in\Omega$.  By Corollary  \ref{C:Sturm} we have that
$\lim_{k\to\infty} Y_k ={\frak G}_n(\Bw;x_1,\ldots,x_n)$ a.e. and
$\lim_{k\to\infty} \tilde Y_k={\frak G}_n(\Bw;y_1,\ldots,y_n)$ a.e.
By the closedness of the partial order (and the fact that the
intersection of two sets of measure $1$ still has measure $1$), we
conclude that  ${\frak G}_n(\Bw;x_1,\ldots, x_n)\leq {\frak
G}_n(\Bw;y_1,\ldots, y_n)$.
\end{proof}

Since the trace metric on the space ${\Bbb P}$ of $m\times m$
positive definite (real or complex) matrices makes it a Loewner-Heinz NPC
space with respect to the Loewner order (see e.g.\ \cite{LL01}), we
have the following corollary.

\begin{corollary}\label{C:mono}
The weighted least squares mean on the set $\mathbb{P}$ of positive definite matrices is
monotonic.
\end{corollary}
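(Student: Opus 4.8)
The plan is to obtain Corollary \ref{C:mono} as a direct instance of Theorem \ref{T:mono}. The only thing requiring verification is that $(\mathbb{P},\delta,\leq)$, with $\delta$ the trace metric and $\leq$ the Loewner order, is a Loewner-Heinz NPC space; once this is in place, Theorem \ref{T:mono} applies with no further work and yields that ${\frak G}_n(\Bw;\cdot)$ is monotonic for each fixed weight $\Bw=(w_1,\ldots,w_n)$.

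Three ingredients must be assembled. First, $(\mathbb{P},\delta)$ is an NPC space: $\mathbb{P}$ is a Riemannian symmetric space of nonpositive sectional curvature, so, as recalled in Remark \ref{R:2.1}, the trace metric satisfies the semiparallelogram law \eqref{E:2.1}; I would simply cite \cite{Lang,LL01} here. Second, the Loewner order is a closed partial order, since $A\leq B$ means exactly that $B-A$ belongs to the closed cone of positive semidefinite matrices, so $\{(A,B):A\leq B\}$ is closed in $\mathbb{P}\times\mathbb{P}$. Third --- the one substantive point --- one needs the Loewner-Heinz condition $x_1\#x_2\leq y_1\#y_2$ whenever $x_i\leq y_i$; as spelled out in the discussion at the beginning of this section, this is an equivalent reformulation of the classical Loewner-Heinz inequality $A\leq B\Rightarrow A^{1/2}\leq B^{1/2}$ (passing from $A\#I\leq B\#I$ to the two-variable statement via congruence invariance of $\#$ together with the order-preservation of congruence transformations), for which I would cite \cite{Bh07} or \cite{LL01}.

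With these three facts, $(\mathbb{P},\delta,\leq)$ is a Loewner-Heinz NPC space and Theorem \ref{T:mono} finishes the proof; specializing to $w_i=1/n$ for all $i$ recovers property (P4) for the symmetric least squares mean and thereby settles the Bhatia--Holbrook conjecture. I do not expect any real obstacle in the corollary itself: the substance is entirely carried by Theorem \ref{T:mono}, which in turn depends on Sturm's strong law (Theorem \ref{T:Sturm}), on Corollary \ref{C:Sturm} identifying the almost-sure limit of the inductive means with ${\frak G}_n(\Bw;x_1,\ldots,x_n)$, and on the elementary monotonicity of the inductive mean (Lemma \ref{L:4.1}). For the corollary one has only to observe that $\mathbb{P}$ meets the hypotheses.
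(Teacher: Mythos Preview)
Your proposal is correct and follows the paper's approach exactly: the paper simply observes (citing \cite{LL01}) that $(\mathbb{P},\delta,\leq)$ is a Loewner-Heinz NPC space and then invokes Theorem \ref{T:mono}. You have merely unpacked the three ingredients of that observation in slightly more detail than the paper's one-line justification.
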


\begin{remark}
Loewner \cite{Loe}  proved that a function defined on an open
interval is operator monotone if and only if it allows an analytic
continuation into the complex upper half-plane with nonnegative
imaginary part. The function $ f(t) = t^{\alpha}, \alpha\in [0,1] $
is operator monotone on the positive reals,  that is, $ X\leq Y$
implies $X^{\alpha}\leq Y^{\alpha}$ for positive definite matrices
$X$ and $Y.$   The inequality was independently proved by Heinz
\cite{Heinz}. It is equivalent to the extended monotonicity property
of the weighted geometric mean: $B_{1}\#_{t}B_{2}\leq
A_{1}\#_{t}A_{2}, t\in [0,1],$ whenever $B_{1}\leq A_{1}$ and
$B_{2}\leq A_{2}.$ It is natural to consider the monotonicity of the
least squares mean $\G_{n}(\omega;{\Bbb B})\leq \G_{n}(\omega;{\Bbb
A})$ whenever $B_i\leq A_i$ for each $i$ as an $n$-variable
Loewner-Heinz inequality for positive definite matrices.
\end{remark}

A function $F:{\Bbb P}^n\to {\Bbb P}$ is \emph{jointly
concave} if for any $(A_1,\dots, A_n),(B_1,\ldots,B_n)\in {\Bbb
P}^n$ and $0\leq t\leq 1$, we have
$$ t F(A_1,\ldots,A_n)+(1-t)F(B_1,\ldots, B_n)\leq F(tA_1+(1-t)B_1,\ldots, tA_n+(1-t)B_n).$$
\begin{proposition}\label{P:cave}
The least squares mean ${\frak G}_n:{\mathbb{P}}^n\to
{\mathbb{P}}$ for the trace metric is jointly concave for each
$n\geq 2$.
\end{proposition}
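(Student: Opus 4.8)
The plan is to run the argument of Theorem~\ref{T:mono} with monotonicity of the inductive mean replaced by joint concavity; since the unweighted mean is the special case $w_i=1/n$ of the weighted one, I will work with a fixed weight $\Bw=(w_1,\dots,w_n)$ throughout. The one ingredient special to $\mathbb{P}$ (a general Loewner-Heinz NPC space carries no linear structure, so joint concavity has no meaning there) is the joint concavity of the binary weighted geometric mean: for $s,t\in[0,1]$,
\begin{equation*}
t\,(A_1\#_s B_1)+(1-t)\,(A_2\#_s B_2)\ \leq\ \bigl(tA_1+(1-t)A_2\bigr)\#_s\bigl(tB_1+(1-t)B_2\bigr),
\end{equation*}
where $\#_s$ is the $s$-weighted mean of the trace metric, $A\#_s B=A^{1/2}(A^{-1/2}BA^{-1/2})^sA^{1/2}$. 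This is classical ($x\mapsto x^s$ is operator monotone on $(0,\infty)$ for $s\in[0,1]$, so $\#_s$ is a Kubo--Ando operator mean, and operator means are jointly concave; see, e.g., \cite{KA80,Bh07}). I will also use, as recorded in the proof of Lemma~\ref{L:4.1}, that $\#_s$ is monotone in each argument.

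The second step is to prove, by induction on $k$, that each inductive mean $S_k\colon\mathbb{P}^k\to\mathbb{P}$ is jointly concave. The base case $S_2=\#_{1/2}$ is the displayed inequality with $s=1/2$. For the inductive step, write $S_{k+1}(x_1,\dots,x_{k+1})=S_k(x_1,\dots,x_k)\#_{1/(k+1)}x_{k+1}$ and chain: joint concavity of $S_k$ (applied to the first $k$ slots) together with monotonicity of $\#_{1/(k+1)}$ in its first argument, followed by joint concavity of $\#_{1/(k+1)}$. This yields joint concavity of $S_{k+1}$.

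The final step is the passage to the limit via the random-walk construction used in the proof of Theorem~\ref{T:mono}. Fix $(A_1,\dots,A_n),(B_1,\dots,B_n)\in\mathbb{P}^n$ and $t\in[0,1]$. On $\Omega=\prod_k\Omega_k$ with $\Omega_k=\{\xi_1,\dots,\xi_n\}$ carrying $\sum_i w_i\delta_{\xi_i}$, define $X_k(\omega)=A_i$, $\tilde X_k(\omega)=B_i$, and $Z_k(\omega)=tA_i+(1-t)B_i$ whenever $\pi_k(\omega)=\xi_i$, so that $Z_k=tX_k+(1-t)\tilde X_k$ pointwise and the three sequences are i.i.d.\ with distributions $\sum_i w_i\delta_{A_i}$, $\sum_i w_i\delta_{B_i}$, and $\sum_i w_i\delta_{tA_i+(1-t)B_i}$. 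With $Y_k(\omega)=S_k(X_1(\omega),\dots,X_k(\omega))$, $\tilde Y_k(\omega)=S_k(\tilde X_1(\omega),\dots,\tilde X_k(\omega))$, and $W_k(\omega)=S_k(Z_1(\omega),\dots,Z_k(\omega))$, the second step gives $W_k(\omega)\geq tY_k(\omega)+(1-t)\tilde Y_k(\omega)$ for every $\omega$. By Corollary~\ref{C:Sturm}, $W_k\to{\frak G}_n(\Bw;tA_1+(1-t)B_1,\dots,tA_n+(1-t)B_n)$, $Y_k\to{\frak G}_n(\Bw;A_1,\dots,A_n)$, and $\tilde Y_k\to{\frak G}_n(\Bw;B_1,\dots,B_n)$ almost everywhere; restricting to the intersection of these three full-measure sets and using that the Loewner order is closed, I pass to the limit in the pointwise inequality to obtain $t{\frak G}_n(\Bw;A_1,\dots,A_n)+(1-t){\frak G}_n(\Bw;B_1,\dots,B_n)\leq{\frak G}_n(\Bw;tA_1+(1-t)B_1,\dots,tA_n+(1-t)B_n)$, the asserted joint concavity. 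The only place where genuine content (as opposed to bookkeeping already set up in Sections~4 and~5) enters is the quoted joint concavity of $\#_s$; the remainder is structurally identical to the proof of Theorem~\ref{T:mono}, so I expect the actual write-up to be quite short.
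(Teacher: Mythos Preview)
Your proof is correct and follows essentially the same route as the paper: establish joint concavity of the inductive mean $S_k$ by induction from the classical joint concavity of $\#_s$, then pass to the limit via Corollary~\ref{C:Sturm} exactly as in the proof of Theorem~\ref{T:mono}. Your write-up is in fact slightly more careful than the paper's in that you make explicit the use of monotonicity of $\#_{1/(k+1)}$ in the inductive step, which the paper subsumes under ``it follows directly by induction.''
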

\begin{proof}
It is a standard result that the two-variable weighted geometric
mean on ${\Bbb P}$ is jointly concave.  It follows directly by
induction that the inductive mean $S_n$ of positive definite
matrices is jointly concave for $n\geq 2$.

Fix  $(A_1,\dots, A_n),(B_1,\ldots,B_n)\in {\Bbb P}^n$ and a weight $\Bw=
(w_1,\ldots,w_n)$.
Construct random variables $\{X_k\}$, $\{\tilde X_k\}$ as in the proof of
Theorem \ref{T:mono}
with $A_i$ replacing $x_i$ and $B_i$
replacing $y_i$ for each $i$. For $Y_k= S_k(X_1,\ldots, X_k)$ and
$\tilde Y_k=S_k(\tilde X_1,\ldots,\tilde X_k\}$, we conclude from
the concavity of $S_k$ that
\begin{eqnarray*}
tY_k+(1-t)\tilde Y_k&\leq& S_k(tX_1+(1-t)\tilde X_1,\ldots,
tX_k+(1-t)\tilde X_k)=S_k(Z_1,\ldots, Z_k),
\end{eqnarray*}
where $Z_i=tX_i+(1-t)\tilde X_i$ for $1\leq i\leq k$.  Note that the $Z_k$ are i.i.d.\ with
each $Z_k$ having distribution the probability measure that assigns mass $w_i$ to
each $tA_i+(1-t) B_i$, $1\leq i\leq n$.
From Corollary \ref{C:Sturm} the limit of both sides exists a.e. and is given by the appropriate
least squares mean, and from the closedness of the order we conclude
$$t{\frak G}_n(\Bw;A_1,\ldots,A_n)+(1-t){\frak G}_n(\Bw;B_1,\ldots,B_m)\leq {\frak G}_n(\Bw;Z_1,
\ldots, Z_n),$$
where $Z_i=tA_i+(1-t)B_i$ for each $i$.
\end{proof}

\section{Other properties of the least squares mean}
The fact that the unweighted least squares mean is bounded above by
the arithmetic mean, and hence below by the
harmonic mean has been recently shown by Yamazaki \cite{Ya}.  We
give an alternative approach via probabilistic methods and derive the
result for the weighted least squares mean.
\begin{proposition}\label{P:geoarith}
For $(A_1,\dots, A_n)\in {\Bbb P}^n$ and a weight $\Bw=
(w_1,\ldots,w_n)$, we have
$$\left(\sum_{i=1}^{n}w_{i}A_{i}^{-1}\right)^{-1}\leq {\frak G}_n(\Bw;A_1,\ldots,A_n)\leq \sum_{i=1}^n w_iA_i.$$
\end{proposition}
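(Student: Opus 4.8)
The plan is to mimic exactly the probabilistic/random-walk strategy already used to prove Theorem~\ref{T:mono} and Proposition~\ref{P:cave}. The essential point is that both inequalities in the statement are preserved by the weighted binary geometric mean $\#_t$ and hence, by induction, by the inductive mean $S_k$; then the limiting assertion of Corollary~\ref{C:Sturm} together with the closedness of the Loewner order transports them to the least squares mean. The key lemma I would isolate is: for $A,B\in\pp$ and $t\in[0,1]$,
\begin{equation*}
\left((1-t)A^{-1}+tB^{-1}\right)^{-1}\;\leq\; A\#_t B\;\leq\;(1-t)A+tB.
\end{equation*}
The right-hand inequality is the classical weighted arithmetic--geometric mean inequality for two positive definite matrices (see, e.g., \cite{Bh07,KA80}); the left-hand inequality follows from it by the self-duality $A^{-1}\#_t B^{-1}=(A\#_t B)^{-1}$ (congruence/inversion being an isometry of the trace metric, it sends the geodesic from $A$ to $B$ to the geodesic from $A^{-1}$ to $B^{-1}$), applied to $A^{-1},B^{-1}$ and then inverted, using that $X\mapsto X^{-1}$ reverses the Loewner order.

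Next I would promote this to the inductive mean. Write $H_k$ and $\alpha_k$ for the weighted harmonic and weighted arithmetic means with the weights $(1/k,\ldots,1/k)$ arising in $S_k$; more precisely, given $(A_1,\ldots,A_k)\in\pp^k$ set $S_k(A_1,\ldots,A_k)$ as in Section~3, and claim by induction on $k$ that
\begin{equation*}
\left(\tfrac1k\sum_{i=1}^k A_i^{-1}\right)^{-1}\;\leq\;S_k(A_1,\ldots,A_k)\;\leq\;\tfrac1k\sum_{i=1}^k A_i.
\end{equation*}
The base case $k=2$ is the lemma with $t=1/2$. For the inductive step, $S_{k+1}=S_k(A_1,\ldots,A_k)\#_{1/(k+1)}A_{k+1}$; applying the lemma with $t=1/(k+1)$, $A=S_k(A_1,\ldots,A_k)$, $B=A_{k+1}$, and then the induction hypothesis together with order-preservation of $X\mapsto((1-t)X^{-1}+tB^{-1})^{-1}$ on the lower side and of $X\mapsto(1-t)X+tB$ on the upper side, yields the bounds for $S_{k+1}$ after the elementary identities $\frac{k}{k+1}\cdot\frac1k\sum_{i\le k}A_i+\frac1{k+1}A_{k+1}=\frac1{k+1}\sum_{i\le k+1}A_i$ and its harmonic-mean analogue. (For the lower bound one also uses that the map $X\mapsto X^{-1}$ converts the weighted arithmetic mean of the inverses into the quantity we want; this is just the definition of the weighted harmonic mean.)

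Finally I would run the construction of Section~5: take $\Omega=\prod_k\Omega_k$ with the product measure built from $\sigma_k=\sum_i w_i\delta_{\xi_i}$, define $X_k(\omega)=A_i$ when $\pi_k(\omega)=\xi_i$, and set $Y_k(\omega)=S_k(X_1(\omega),\ldots,X_k(\omega))$. For each $\omega$, the $S_k$-bound just established gives
\begin{equation*}
\left(\tfrac1k\sum_{i=1}^k X_i(\omega)^{-1}\right)^{-1}\leq Y_k(\omega)\leq \tfrac1k\sum_{i=1}^k X_i(\omega).
\end{equation*}
By the strong law of large numbers in $\R^{m\times m}$ (the matrix entries are bounded real i.i.d.\ sequences), for almost every $\omega$ we have $\frac1k\sum_{i=1}^k X_i(\omega)\to\sum_{i=1}^n w_iA_i$ and $\frac1k\sum_{i=1}^k X_i(\omega)^{-1}\to\sum_{i=1}^n w_iA_i^{-1}$, and hence the harmonic-type lower bound converges to $(\sum_i w_iA_i^{-1})^{-1}$ by continuity of inversion. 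Meanwhile Corollary~\ref{C:Sturm} gives $Y_k(\omega)\to\frak{G}_n(\Bw;A_1,\ldots,A_n)$ a.e. Intersecting these three full-measure sets and using the closedness of the Loewner order yields the claimed two-sided inequality. I expect the only mildly delicate point to be the bookkeeping in the inductive step (keeping straight which weights appear and that the relevant scalar maps on $\pp$ are order-preserving); everything else is a direct transcription of the method already developed in Sections~3--6.
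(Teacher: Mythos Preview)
Your proposal is correct and follows the same probabilistic strategy as the paper: bound the inductive mean $S_k$ by the arithmetic mean of its arguments, run the random-walk construction of Section~5, and pass to the limit using Corollary~\ref{C:Sturm} and closedness of the Loewner order. Two minor differences are worth flagging. First, for the convergence $\frac{1}{k}\sum_{i=1}^k X_i(\omega)\to\sum_i w_iA_i$, the paper invokes Proposition~\ref{P:hilb}(v) (i.e., Sturm's theorem applied to the Hilbert space of Hermitian matrices with the trace inner product), whereas you appeal directly to the classical strong law of large numbers in $\mathbb{R}^{m\times m}$; both are valid here since the $X_i$ take only finitely many values. Second, for the harmonic-mean lower bound you carry a two-sided inequality through the entire induction and limit, while the paper establishes only the upper bound directly and then deduces the lower bound in one line from self-duality: since inversion is an isometry of the trace metric, $\frak{G}_n(\Bw;A_1^{-1},\ldots,A_n^{-1})=\frak{G}_n(\Bw;A_1,\ldots,A_n)^{-1}$, and applying the already-proved upper bound to $(A_1^{-1},\ldots,A_n^{-1})$ and inverting gives the harmonic lower bound. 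The paper's route is slightly more economical; yours is slightly more self-contained.
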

\begin{proof}
It is a standard result that the two-variable weighted geometric
mean on ${\mathbb{P}}$ is below the corresponding weighted
arithmetic mean: $A\#_t B\leq (1-t)A+tB$  for $0\leq t\leq 1$. It
follows by induction that the inductive mean satisfies for each $k$
\begin{eqnarray*}
S_k(B_1,\ldots,B_{k}) &=& S_{k-1}(B_1,\ldots, B_{k-1})\#_{1/k} B_{k}\\
&\leq & \frac{k-1}{k}\sum_{i=1}^{k-1}\frac{1}{k-1} B_i +\frac{1}{k}
B_k= \frac{1}{k}\sum_{i=1}^k B_i.
\end{eqnarray*}

Construct a sequence of i.i.d.\ random variables $\{X_k\}$ as in
Section 5 such that  the distribution is $\sum_{i=1}^n
w_i\delta_{A_i}$ for each $X_k$.   Set $Y_k= S_k(X_1,\ldots, X_k)$
and
 for each $k$.   From Corollary \ref{C:Sturm}
$\lim_{k\to \infty} Y_k(\omega)={\frak G}_n(\Bw;A_1,\ldots, A_n)$ a.e.

Endow the space of Hermitian matrices ${\Bbb H}$ containing
$\mathbb{P}$ with the Hilbert space structure with inner product
$\langle A,B\rangle=\mbox{tr}A^{*}B$.  Then $\mathbb{P}$ is an open
subspace of ${\Bbb H}$.  Set $Z_k=\sum_{i=1}^k (1/k) X_i$, where
$\{X_k\}$ are the random variables of the previous paragraph.  By
Proposition \ref{P:hilb}9(v), $\lim_{k\to\infty} Z_{k}(\omega)\to
\sum_{i=1}^n w_iA_i$ a.e.  By the first paragraph $Y_k(\omega)\leq
Z_k(\omega)$ for all $k,\omega$.   From the closure of the order, we
conclude that ${\frak G}_n(\Bw;A_1,\ldots,A_n)\leq \sum_{i=1}^n
w_iA_i$.

The first inequality in the conclusion of the proposition follows from the second and the
fact that inversion is an isometry for the trace metric and hence preserves the least squares
mean.
\end{proof}

Let $M$ be an NPC space.  Given probability measures $p,q\in\mathcal{P}(M)$, we
say that a  probability measure $\mu\in \PP(M^2)$ is a \emph{coupling} of $p$ and $q$
if the  \emph{marginals} of $\mu$ are $p$ and $q$, that is, if for all Borel sets
$B\in\mathcal{B}(M)$
\begin{equation}
\mu(B\times M)=p(B) \mbox{ and } \mu(M\times B)=q(B).
\end{equation}
\begin{definition}
The ($L^1$)-\emph{distance} $\rho$ on $\PP^1(M)$  is given by
$$W(p,q)=\inf\left\{ \int_{M\times M} d(x,y) \mu(dxdy):\mu \mbox{ is a coupling of }p\mbox{ and }q\right\}.$$
 We adopt the most common name for the metric, the \emph{Wasserstein} distance, although it also appears under
 a variety of other names such as the \emph{Kantorovich-Rubenstein} distance.
\end{definition}

\begin{proposition}\label{P:5.2}
For $(x_1,\ldots,x_n),(y_1,\ldots,y_n)\in M^n$, a weight $\Bw=(w_1,\ldots,w_n)$,
 and the corresponding finitely supported probability measures
$q_1=\sum_{k=1}^n w_i\delta_{x_i}$  and $q_2=\sum_{k=1}^n w_i\delta_{y_i}$ on $M$,
$$d({\frak G}_n(\Bw;x_1,\ldots,x_n), {\frak G}_n(\Bw;y_1,\ldots,y_n))\leq W(q_1,q_2)\leq \sum_{i=1}^ n w_id(x_i,y_i).$$
Hence, in particular, the least squares mean ${\frak G}_n$ is continuous for each $\Bw$.
\end{proposition}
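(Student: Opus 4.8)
The plan is to establish the two inequalities separately. The second inequality, $W(q_1,q_2)\le\sum_{i=1}^n w_i d(x_i,y_i)$, is the easy one: I would simply exhibit a coupling of $q_1$ and $q_2$ realizing (an upper bound for) the right-hand side. The obvious candidate is the ``diagonal'' coupling $\mu=\sum_{i=1}^n w_i\,\delta_{(x_i,y_i)}$ on $M\times M$. One checks directly from the definition that the marginals of $\mu$ are $q_1$ and $q_2$ respectively, and that $\int_{M\times M} d(x,y)\,\mu(dxdy)=\sum_{i=1}^n w_i d(x_i,y_i)$. Since $W(q_1,q_2)$ is an infimum over all couplings, this gives the bound immediately. (No optimality of $\mu$ is needed.)

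The first inequality, $d(b(q_1),b(q_2))\le W(q_1,q_2)$, is the substantive point: it asserts that the barycenter map $b:\PP^1(M)\to M$ is $1$-Lipschitz from the Wasserstein metric to $d$. Here I would invoke the random-variable machinery of Sections 4--5 rather than argue directly with measures. Given a coupling $\mu$ of $q_1$ and $q_2$, realize it as the joint distribution of a pair $(X,\tilde X)$ of $M$-valued random variables on a single probability space; then $q_{X}=q_1$, $q_{\tilde X}=q_2$, and $E[d(X,\tilde X)]=\int d\,\mu$. Take i.i.d.\ copies $(X_k,\tilde X_k)$ of $(X,\tilde X)$ (as in the product construction of Section 5, but now on the product of copies of $M\times M$, so that the two coordinate-sequences stay coupled trial by trial). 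Form the inductive means $Y_k=S_k(X_1,\ldots,X_k)$ and $\tilde Y_k=S_k(\tilde X_1,\ldots,\tilde X_k)$. By Corollary \ref{C:Sturm}, $Y_k\to b(q_1)$ and $\tilde Y_k\to b(q_2)$ almost everywhere. The key estimate is then that the inductive mean is itself nonexpansive in the sense
$$d\bigl(S_k(u_1,\ldots,u_k),S_k(v_1,\ldots,v_k)\bigr)\le \frac1k\sum_{j=1}^k d(u_j,v_j),$$
which follows by induction from the basic NPC contraction property $d(x\#_t y,x'\#_t y')\le(1-t)d(x,x')+t\,d(y,y')$ (a consequence of the semiparallelogram law, cf.\ the midpoint case already used in Lemma \ref{L:4.1} and the weighted refinement (\ref{E:2.2})), exactly as in part (iii) of Proposition \ref{P:hilb}. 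Applying this with $u_j=X_j(\omega)$, $v_j=\tilde X_j(\omega)$ gives $d(Y_k(\omega),\tilde Y_k(\omega))\le\frac1k\sum_{j=1}^k d(X_j(\omega),\tilde X_j(\omega))$. Taking expectations and using the (classical, scalar) strong law of large numbers for the i.i.d.\ real random variables $d(X_j,\tilde X_j)$, the right-hand side converges a.e.\ to $E[d(X_1,\tilde X_1)]=\int_{M\times M} d\,\mu$. Passing to the limit in the inequality (using continuity of $d$ and a.e.\ convergence of both $Y_k$ and $\tilde Y_k$) yields $d(b(q_1),b(q_2))\le\int d\,\mu$; taking the infimum over couplings $\mu$ gives $d(b(q_1),b(q_2))\le W(q_1,q_2)$. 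By Remark \ref{R:3.3}, $b(q_1)={\frak G}_n(\Bw;x_1,\ldots,x_n)$ and $b(q_2)={\frak G}_n(\Bw;y_1,\ldots,y_n)$, which is the desired inequality.

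For the final assertion, continuity of ${\frak G}_n(\Bw;\cdot)$, I would note that fixing $\Bw$ and letting $(y_1,\ldots,y_n)\to(x_1,\ldots,x_n)$ in $M^n$, the chain of inequalities just proved gives $d({\frak G}_n(\Bw;\mathbf x),{\frak G}_n(\Bw;\mathbf y))\le\sum_{i=1}^n w_i d(x_i,y_i)\to 0$; indeed this shows ${\frak G}_n(\Bw;\cdot)$ is Lipschitz (hence uniformly continuous) on $M^n$ with the $\ell^1$-type product metric, and a fortiori continuous. The main obstacle is the first inequality, and within it the only real work is the inductive nonexpansiveness of $S_k$ together with the care needed to couple the two random sequences on a common space so that Corollary \ref{C:Sturm} applies to each coordinate simultaneously; once that is set up, the scalar law of large numbers does the rest. (Alternatively, one could bypass the probabilistic route and prove $d(b(q_1),b(q_2))\le W(q_1,q_2)$ directly from the variational characterization (\ref{E:3.1}) of the barycenter together with uniform convexity of the metric, but the random-variable argument reuses machinery already in place and is the most economical here.)
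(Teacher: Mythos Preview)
Your argument is correct, and for the second inequality and the identification $b(q_i)={\frak G}_n(\Bw;\cdot)$ via Remark~\ref{R:3.3} you do exactly what the paper does. The difference is in the first inequality: the paper simply \emph{quotes} the fundamental contraction property $d(b(p),b(q))\leq W(p,q)$ of the barycenter map as Theorem~6.3 of Sturm~\cite{St03}, whereas you re-derive it (in the finitely supported case at hand) by running the probabilistic engine of Sections~4--5 on coupled i.i.d.\ pairs $(X_k,\tilde X_k)$, using the pointwise nonexpansiveness estimate for $S_k$ and the scalar strong law of large numbers. Your route is more self-contained relative to the paper's own toolkit and nicely parallels the proofs of Theorem~\ref{T:mono} and Proposition~\ref{P:cave}; the paper's route is shorter but leans on an external citation. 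Two small remarks: (i) strictly speaking you should invoke Theorem~\ref{T:Sturm} rather than Corollary~\ref{C:Sturm}, since your product space is built over the (finite) support of the coupling $\mu$, not over the $n$-point set of Section~5; and (ii) your parenthetical pointer to Lemma~\ref{L:4.1} for the ``midpoint case'' of the contraction $d(x\#_t y,x'\#_t y')\leq(1-t)d(x,x')+t\,d(y,y')$ is misplaced---that lemma concerns monotonicity, not nonexpansiveness---though the contraction itself is of course a standard CAT(0) fact following from~(\ref{E:2.2}).
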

\begin{proof}
Define $\mu$ on $M\times M$ by $\mu=\sum_{i=1}^n w_i
\delta_{(x_i,y_i)}$.  One sees readily that $\mu$ is a coupling of
$p$ and $q$, and thus $W(p,q)\leq \int_{M\times M}
d(x,y)\mu(dxdy)=\sum_{i=1}^n w_id(x_i,y_i).$ By Theorem 6.3 of
\cite{St03}, the barycentric map $b:\PP^1(M)\to M$ satisfies for all
$p,q$ the fundamental contraction property $d(b(p),b(q))\leq
W(p,q)$. By Remark \ref{R:3.3} $b(q_1)={\frak
G}_n(\Bw;x_1,\ldots,x_n)$ and similarly $b(q_2)={\frak
G}_n(\Bw;y_1,\ldots,y_n)$ . Thus
$$d({\frak G}_n(\Bw;x_1,\ldots,x_n), {\frak G}_n(\Bw;y_1,\ldots,y_n))=d(b(q_1),b(q_2))\leq W(q_1,q_2)\leq \sum_{i=1}^ n w_i d(x_i,y_i).$$
The fact that the right hand of the preceding is larger than the left hand directly establishes the continuity of ${\frak G}_n$.
\end{proof}

From this result together with Corollary \ref{C:mono} and Propositions \ref{P:cave} and \ref{P:geoarith} we
conclude that the least squares mean of positive definite matrices
satisfies the continuity, monotonicity, joint concavity, and AGM inequality properties, and
hence all the fundamental properties of the
 geometric means of positive definite matrices defined for and satisfied  by the ALM and BMP constructions
 \cite{ALM04,BMP10}; see \cite{Ya} for other properties.

\section{Appendix: The least squares mean on symmetric cones}
In this section, we shall see that the techniques and results from
the probabilistic treatment of the least squares mean for positive
definite matrices carry over, typically with little change, to the
case of symmetric cones. We first briefly describe (following mostly
\cite{FK}) some Jordan-algebraic concepts pertinent to our purpose.
A {\it Jordan algebra} $V$ over ${\Bbb R}$ is a finite-dimensional
commutative algebra with identity $e$ satisfying
$x^{2}(xy)=x(x^{2}y)$ for all $x,y\in V.$ For $x\in V,$ let $L(x)$
be the linear operator defined by $L(x)y=xy,$ and let
$P(x)=2L(x)^{2}-L(x^{2}).$ The map $P$ is called the quadratic
representation of $V.$ An element $x\in V$ is said to be invertible
if there exists an element $x^{-1}$ in the subalgebra generated by
$x$ and $e$ such that $xx^{-1}=e.$


An element $c\in V$ is called an idempotent if $c^{2}=c.$ We say
that $c_{1},\dots,c_{k}$ is a complete system of orthogonal
idempotents if $c_{i}^{2}=c_{i}, c_{i}c_{j}=0, i\neq j,
c_{1}+\cdots+c_{k}=e.$  An idempotent is primitive if it is non-zero
and cannot be written as the sum of two non-zero idempotents. A
Jordan frame is a complete system of primitive idempotents.

A Jordan algebra $V$ is said to be {\it Euclidean} if there exists
an inner product $\langle \cdot,\cdot\rangle$ such that
for all $x,y,z\in V:$
\begin{eqnarray}\label{E:eu}\langle xy,z\rangle=\langle y,xz\rangle. \end{eqnarray}
The following spectral theorem for Euclidean Jordan algebras
appears in \cite{FK}.
\begin{theorem}\label{T:se} Any two Jordan frames in an Euclidean Jordan algebra
$V$ have the same number of elements $($called the rank of $V$,
denoted ${\mathrm{rank}}(V)$$)$. Given $x\in V,$ there exists a
Jordan frame $c_{1}, \dots, c_{r}$ and real numbers
$\lambda_{1},\dots, \lambda_{r}$ such that
$$x=\sum_{i=1}^{r}\lambda_{i}c_{i}.$$
\end{theorem}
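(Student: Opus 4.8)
The plan is to establish the spectral decomposition of a single element first, and then deduce that the number of elements in a Jordan frame is an invariant of $V$.

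\emph{Step 1 (the associative subalgebra generated by $x$).} Fix $x\in V$ and let $\mathcal{A}=\mathbb{R}[x]=\operatorname{span}\{e,x,x^{2},\ldots\}$, a finite-dimensional associative commutative subalgebra. The decisive use of the Euclidean hypothesis is that, by (\ref{E:eu}), every $L(y)$ is self-adjoint; in particular multiplication by $x$ restricts to a self-adjoint operator on the real inner-product space $\mathcal{A}$. Such an operator is diagonalizable with real eigenvalues, so $\mathcal{A}$ has no nonzero nilpotents and is isomorphic as a real algebra to $\mathbb{R}^{k}$ for some $k$. Pulling back the coordinate idempotents yields a complete system $u_{1},\ldots,u_{k}$ of pairwise orthogonal (Jordan-orthogonal, $u_{i}u_{j}=0$) idempotents in $\mathcal{A}$ and distinct reals $\mu_{1},\ldots,\mu_{k}$ with $x=\sum_{j}\mu_{j}u_{j}$; equivalently the $\mu_{j}$ are the distinct real roots of the minimal polynomial of $x$ and the $u_{j}$ are the corresponding Lagrange-interpolation idempotents.

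\emph{Step 2 (refining to a Jordan frame).} The $u_{j}$ need not be primitive in $V$. First note that any family of pairwise orthogonal nonzero idempotents is linearly independent (multiply a vanishing relation $\sum a_{i}c_{i}=0$ by $c_{j}$), hence has at most $\dim V$ members; therefore repeatedly splitting a non-primitive idempotent as a sum of two nonzero orthogonal idempotents must terminate. Applying this inside each $u_{j}$, and attaching the scalar $\mu_{j}$ to every piece cut out of $u_{j}$, produces a Jordan frame $c_{1},\ldots,c_{r}$ (complete system of orthogonal primitive idempotents) and reals $\lambda_{1},\ldots,\lambda_{r}$, no longer necessarily distinct, with $x=\sum_{i=1}^{r}\lambda_{i}c_{i}$. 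This gives the asserted decomposition.

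\emph{Step 3 (the cardinality is intrinsic).} Let $c_{1},\ldots,c_{r}$ be a Jordan frame. Using the Peirce decomposition of an idempotent $c$ (the eigenspaces of the self-adjoint $L(c)$, with eigenvalues only $1,\tfrac12,0$), one sees that $c$ is primitive if and only if $V(c,1)=\mathbb{R}c$: indeed $V(c,1)$ is itself a Euclidean Jordan algebra with identity $c$, and if it had dimension $>1$ it would contain a nontrivial idempotent. Now consider $x_{0}=\sum_{i=1}^{r} i\,c_{i}$; a Vandermonde argument shows $e,x_{0},\ldots,x_{0}^{r-1}$ are linearly independent and $\prod_{i=1}^{r}(t-i)$ is the minimal polynomial, so $\operatorname{rank}(x_{0})=r$ exactly, and hence $\operatorname{rank}(V):=\max_{x}\operatorname{rank}(x)\ge r$. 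For the reverse inequality one invokes the generic minimal polynomial: the coefficients of the minimal polynomial of a regular element are polynomial in $x$, so by density of the regular elements every $x\in V$ satisfies a monic polynomial of degree $\operatorname{rank}(V)$; applied to $x_{0}$, whose minimal polynomial has degree exactly $r$, this forces $r\le\operatorname{rank}(V)$. Thus every Jordan frame has exactly $\operatorname{rank}(V)$ elements.

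The main obstacle is Step 3, the well-definedness of the rank. It relies on two facts that genuinely need the inner-product structure: that primitivity of $c$ is equivalent to $\dim V(c,1)=1$, and that the regular elements are dense with a generic minimal polynomial whose coefficients depend polynomially (hence continuously) on $x$. Both are developed in Chapters~II--III of \cite{FK}, and for our purposes it suffices to quote them rather than reproduce the arguments.
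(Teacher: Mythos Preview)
The paper does not prove this theorem at all: it is stated as background and attributed to Faraut--Kor\'anyi \cite{FK} (``The following spectral theorem for Euclidean Jordan algebras appears in \cite{FK}''). So there is no in-paper proof to compare your attempt against; your sketch already goes well beyond what the authors supply, and Steps~1--2 are the standard Faraut--Kor\'anyi argument.

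That said, Step~3 contains a logical slip. Both of your arguments there yield the \emph{same} inequality $r\le\operatorname{rank}(V)$. The Vandermonde computation gives $\operatorname{rank}(x_{0})=r$, whence $\operatorname{rank}(V)=\max_{x}\operatorname{rank}(x)\ge r$; and the generic-minimal-polynomial argument says $x_{0}$ satisfies a monic polynomial of degree $\operatorname{rank}(V)$, whence $r=\deg(\text{min.\ poly.\ of }x_{0})\le\operatorname{rank}(V)$---the same direction again. You never establish $\operatorname{rank}(V)\le r$, i.e., that no Jordan frame is \emph{shorter} than $\operatorname{rank}(V)$. What is missing is, first, that for a regular element the idempotents coming out of Step~1 are already primitive (so \emph{some} Jordan frame has exactly $\operatorname{rank}(V)$ elements), and second, a device---transitivity of $\operatorname{Aut}(V)$ on Jordan frames, or the fact that in a simple factor all primitive idempotents share the same trace---to transport this count to an arbitrary Jordan frame. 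Since you already defer to \cite{FK} for the Step~3 machinery, this is more a slip in the write-up than a gap in strategy, but as written the two displayed inequalities do not close the argument.
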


\begin{definition} Let $V$ be a Euclidean Jordan algebra of
${\mathrm{rank}}(V)=r.$  The spectral mapping $\lambda:V\to {\Bbb
R}^{r}$ is defined by $\lambda(x)=(\lambda_{1}(x),\dots,
\lambda_{r}(x)),$ where the $\lambda_{i}(x)$'s are eigenvalues of $x$
$($with multiplicities$)$ as in Theorem $\ref{T:se}$ in
non-increasing order $\lambda_{{\mathrm{max}}}(x)=\lambda_{1}(x)\geq
\lambda_{2}(x)\geq \cdots\geq
\lambda_{r}(x)=\lambda_{\mathrm{min}}(x).$

We define ${\mathrm{det}}(x)=\prod_{i=1}^{r}\lambda_{i}(x)$ and $
{\mathrm{tr}}(x)=\sum_{i=1}^{r}\lambda_{i}(x).$ Then ${\mathrm{tr}}$
is a linear form on $V$ and ${\mathrm{det}}$ is a homogeneous
polynomial of degree $r$ on $V.$
\end{definition}

The trace inner product $\langle x,y\rangle={\mathrm{tr}}(xy)$ in a
Euclidean Jordan algebra satisfies (\ref{E:eu}). We will assume that
$V$ is a  Euclidean Jordan algebra of rank $r$  and equipped with
the trace inner product  $\langle x,y\rangle={\mathrm{tr}}(xy).$ Let
$Q$ be the set of all square elements of $V.$ Then $Q$ is a closed
convex cone of $V$ with $Q\cap -Q=\{0\},$ and is the set of elements
$x\in V$ such that $L(x)$ is positive semi-definite.  It turns out
that $Q$ has non-empty interior $\Omega,$ and $\Omega$ is a
symmetric cone, that is, the group $G(\Omega)=\{g\in
{\mathrm{GL}}(V)| g(\Omega)=\Omega\}$ acts transitively on it and
$\Omega$ is a self-dual cone with respect to the inner product
$\langle \cdot|\cdot\rangle.$ Furthermore,  for any $a$ in $\Omega$,
$P(a)\in G(\Omega)$ and is positive definite. We note that any
symmetric cone (self-dual, homogeneous open convex cone) can be
realized as an interior of squares in an appropriate Euclidean
Jordan algebra \cite{FK}.

We remark  that
${\mathrm{det}}(P(a^{1/2})b)={\mathrm{det}}(a){\mathrm{det}}(b)$ for
all $a,b\in \Omega$ and ${\mathrm{tr}}(\log a)=\log
{\mathrm{det}}(a)$ for all $a\in \Omega,$ where
$a^{1/2}=\sum_{i=1}^{r}\lambda_{i}^{1/2}c_{i},\ \log
a=\sum_{i=1}^{r}(\log \lambda_{i})c_{i},$ and $
a=\sum_{i=1}^{r}\lambda_{i}c_{i}$ a spectral decomposition of $a$
(\cite{FK}).

\begin{proposition} The symmetric cone $\Omega\subseteq V$ has the following
properties:
$$\Omega=\{x^{2}: x\ {\mathrm{is\ invertible}}\}=\{x: L(x)\ {\mathrm{is\ positive\
definite}}\}=\{x: \lambda_{\min}(x)>0\}.$$
\end{proposition}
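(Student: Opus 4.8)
The plan is to prove the displayed chain of equalities by routing everything through the intermediate description $\Omega=\{x:\lambda_{\min}(x)>0\}$, using only the spectral theorem (Theorem \ref{T:se}), the two descriptions of $Q$ already in hand (the squares $\{x^{2}:x\in V\}$, and $\{x:L(x)\text{ positive semidefinite}\}$), and the fact that $\Omega=\operatorname{int}Q$ by construction.

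\emph{Step 1 (elementary spectral facts).} From the spectral decomposition $x=\sum_{i}\lambda_{i}(x)c_{i}$ one reads off that $x\in Q$ iff $\lambda_{i}(x)\ge 0$ for all $i$: if the eigenvalues are nonnegative then $x=\bigl(\sum_{i}\lambda_{i}(x)^{1/2}c_{i}\bigr)^{2}$, while if $x=y^{2}$ with $y=\sum_{k}\mu_{k}d_{k}$ then $x=\sum_{k}\mu_{k}^{2}d_{k}$, so by uniqueness of the eigenvalues the spectrum of $x$ consists of the squares $\mu_{k}^{2}\ge 0$. Moreover $x$ is invertible iff every $\lambda_{i}(x)\ne 0$ (immediate from the spectral decomposition, with $x^{-1}=\sum_{i}\lambda_{i}(x)^{-1}c_{i}$); hence for invertible $x$ we get $\lambda_{\min}(x^{2})=\min_{i}\lambda_{i}(x)^{2}>0$, and conversely any $y$ with $\lambda_{\min}(y)>0$ is the square of the invertible element $\sum_{i}\lambda_{i}(y)^{1/2}c_{i}$. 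This already gives $\{x:\lambda_{\min}(x)>0\}=\{x^{2}:x\text{ invertible}\}$.

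\emph{Step 2 (the operator description, algebraically).} Next I would show $\{x:\lambda_{\min}(x)>0\}=\{x:L(x)\text{ positive definite}\}$; recall $L(x)$ is self-adjoint for the trace inner product by (\ref{E:eu}), so positive definiteness of $L(x)$ makes sense. If $\delta:=\lambda_{\min}(x)>0$, then $x-\delta e=\sum_{i}(\lambda_{i}(x)-\delta)c_{i}$ has nonnegative eigenvalues, so $x-\delta e\in Q$ by Step 1, whence $L(x)-\delta\,\mathrm{id}=L(x-\delta e)$ is positive semidefinite, i.e.\ $L(x)\succeq\delta\,\mathrm{id}\succ 0$. Conversely, if $L(x)$ is positive definite, then $L(x)\succeq 0$ gives $x\in Q$, so $\lambda_{\min}(x)\ge 0$; and if some $\lambda_{j}(x)$ were $0$, then $L(x)c_{j}=xc_{j}=\lambda_{j}(x)c_{j}=0$ with $c_{j}\ne 0$ would make $L(x)$ singular, a contradiction; hence $\lambda_{\min}(x)>0$.

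\emph{Step 3 (closing the loop via $\Omega=\operatorname{int}Q$).} Since $x\mapsto L(x)$ is linear, hence continuous, with values among the self-adjoint operators---on which positive definiteness is an open condition---the set $\{x:L(x)\text{ positive definite}\}$ is open in $V$; it is nonempty (it contains $e$, as $L(e)=\mathrm{id}$) and contained in $Q$, hence contained in $\operatorname{int}Q=\Omega$. For the reverse inclusion, let $x\in\Omega\subseteq Q$, so $\lambda_{\min}(x)\ge 0$ by Step 1; if $\lambda_{\min}(x)=0$, then for every $\varepsilon>0$ the element $x-\varepsilon e$ has a strictly negative eigenvalue and so lies outside $Q$, contradicting $x\in\operatorname{int}Q$; thus $\lambda_{\min}(x)>0$, and then $L(x)$ is positive definite by Step 2. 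Combining Steps 1--3 yields $\Omega=\{x:L(x)\text{ positive definite}\}=\{x:\lambda_{\min}(x)>0\}=\{x^{2}:x\text{ invertible}\}$. I do not expect a genuine obstacle here; the one point needing care is the identity $\Omega=\operatorname{int}Q$, where one should argue with the explicit perturbation $x-\varepsilon e$ rather than with a general continuity-of-eigenvalues statement, so that nothing beyond the spectral theorem and the two given descriptions of $Q$ is invoked.
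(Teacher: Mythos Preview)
The paper does not give a proof of this proposition at all: it is stated as background from the general theory of Euclidean Jordan algebras and symmetric cones (the surrounding material is attributed to \cite{FK}), with no argument supplied. So there is no ``paper's proof'' to compare against.

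Your argument is correct and entirely self-contained relative to the material the paper has already set up (the spectral theorem, the two descriptions of $Q$, and $\Omega=\operatorname{int}Q$). Each step checks out: in Step~1 the computation $xc_{j}=\lambda_{j}(x)c_{j}$ from orthogonality of the Jordan frame is exactly what makes the square-root construction and the invertibility criterion work; in Step~2 the use of $L(x-\delta e)=L(x)-\delta\,\mathrm{id}$ together with $Q=\{x:L(x)\succeq 0\}$ is clean, and the kernel argument $L(x)c_{j}=xc_{j}=0$ is valid since $c_{j}\ne 0$; in Step~3 the perturbation $x-\varepsilon e$ shares the Jordan frame of $x$, so its eigenvalues are $\lambda_{i}(x)-\varepsilon$, which is all you need. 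The one implicit appeal is to uniqueness of the eigenvalue multiset in the spectral theorem, but the paper already takes that for granted in defining the spectral map $\lambda:V\to\mathbb{R}^{r}$, so you are on safe ground.
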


The space ${\Bbb H}_{m}$ of $m\times m$ Hermitian matrices equipped
with the trace inner product $\langle
X,Y\rangle={\mathrm{tr}}(X^{*}Y)$ and the Jordan product $X\circ
Y=\frac{1}{2}(XY+YX)$ is a typical example of Euclidean Jordan
algebras. In this case the corresponding symmetric cone is ${\Bbb
P}_{m},$ the convex cone of $m\times m$ positive definite Hermitian
matrices, and the quadratic representation is given by $P(X)Y=XYX.$

It turns out \cite{FK} that the symmetric cone $\Omega$  admits a
$G(\Omega)$-invariant Riemannian metric  defined by $\langle
u,v\rangle_{x}=\langle P(x)^{-1}u,v\rangle, x\in \Omega, u,v\in V.$
The inversion $j(x)=x^{-1}$ is  an  involutive isometry fixing $e.$
It is a symmetric Riemannian space of non-compact type and hence is
an ${\mathrm {NPC}}$ space with respect to its distance metric
\cite{Lang,LL01}. The isometry properties just mentioned give symmetric
cone analogs of properties (P6) and (P8) for the weighted least
squares mean.  Permutation invariance (P3) of the least squares mean
holds in any metric space.

The unique geodesic curve joining
$a$ and $b$ is $t\mapsto a\#_{t}b:=P(a^{1/2})(P(a^{-1/2})b)^{t}$
 and the Riemannian distance $d(a,b)$ is given by
$d(a,b)=\left(
\sum_{i=1}^{r}\log^{2}\lambda_{i}(P(a^{-1/2})b)\right)^{1/2}.$ See
\cite{Lang,LL01,Lim} for more details.
 The geodesic middle (geometric mean) of $a$ and $b$ is given by
$a\#b:=a\#_{1/2}b=P(a^{1/2})(P(a^{-1/2})b)^{1/2}.$

In \cite{Lim,Lim1}, it is shown that the geometric mean is monotone
for the cone ordering, $x\leq y$ if and only if $y-x\in {\overline
\Omega}$, and therefore we conclude that every symmetric cone is a
Loewner-Heinz NPC space. By Theorem \ref{T:mono} and Proposition
\ref{P:5.2}, the weighted least squares mean on a symmetric cone is
monotonic and non-expansive, in particular continuous.  Hence
properties (P4) and (P5) are satisfied.

The AGH-inequality for the two-variable case can be reduced to the case of
two elements sharing a ``diagonalization" over some Jordan frame, in which
case the inequality follows from the real number case.
The proof of Proposition \ref{P:geoarith} then establishes (P10)
 for the general weighted least squares mean.

 We say that two
 elements $a$ and $b$
\emph{commute} if they share the same Jordan frame. Then $(P1)$ follows
easily  for the least squares mean.

The properties $(P2)$ and $(P9)$ follow from the characterization
 of the least squares mean as the unique member of $\Omega$ satisfying
 $\sum_{i=1}^{n}w_{i}\log(P(x^{1/2})a_{i}^{-1})=0$ (which follows from  a
 standard method for computing the Hessian operator of the distance
 function on Riemannian manifolds).   If  $z={\frak B}_{n}({\bf
 w};x_{1},\dots,x_{n}),$ then
 $\sum_{i=1}^{n}w_{i}\log P(z^{1/2})x_{i}=0.$
Setting  $y=\alpha_{1}^{w_{1}}\cdots \alpha_{n}^{w_{n}}z$ we have
 \begin{eqnarray*}
 \sum_{i=1}^{n}w_{i}\log(P(y^{1/2})(\alpha_{i}x_{i})^{-1})&=&\sum_{i=1}^{n}w_{i}\log\frac{\prod_{i=1}^{n}\alpha_{i}^{w_{i}}}{\alpha_{i}}
 P(z^{1/2})x_{i}^{-1}\\
 &=&\sum_{i=1}^{n}w_{i}\left(\log
 \frac{\prod_{i=1}^{n}\alpha_{i}^{w_{i}}}{\alpha_{i}} e+ \log
 P(z^{1/2})x_{i}^{-1}\right)\\
 &=&\sum_{i=1}^{n}w_{i}\log
 \frac{\prod_{i=1}^{n}\alpha_{i}^{w_{i}}}{\alpha_{i}} e=0
 \end{eqnarray*}
 where  the second equality follows from the fact that $\log ta=\log
 t e+\log a$ for any $t>0$ and $a\in \Omega.$ This establishes
 $(P2)$. The determinantal identity follows by considering the trace
 functional:
 \begin{eqnarray*}
 0&=&{\mathrm{tr}}\left(\sum_{i=1}^{n}w_{i}\log
 P(z^{1/2})a_{i}^{-1}\right)=\sum_{i=1}^{n}w_{i}{\mathrm{tr}}(\log
 P(z^{1/2})a_{i}^{-1})\\&=&\sum_{i=1}^{n}w_{i}\log{\mathrm{det}}(
 P(z^{1/2})a_{i}^{-1})=\sum_{i=1}^{n}w_{i}\log{\mathrm{det}}(z){\mathrm{det}}(a_{i}^{-1})\\
 &=&\sum_{i=1}^{n}w_{i}\log{\mathrm{det}}(z)-\sum_{i=1}^{n}w_{i}\log{\mathrm{det}}(a_{i})=
\log{\mathrm{det}}(z)-\log
 \prod_{i=1}^{n}{\mathrm{det}}(a_{i})^{w_{i}}.
 \end{eqnarray*}

 The joint concavity (P7) of the two-variable geometric mean  is as yet unknown
 for general symmetric cones (unlike the positive definite matrix case)
 and hence we do not yet have the joint concavity property for the weighted
 least squares mean.

\end{document}